\documentclass{amsart}
\usepackage{mathptmx}

\usepackage{amsmath}
\usepackage{amscd}
\usepackage{amssymb}
\usepackage{enumerate}
\usepackage{hyperref}

\setlength{\parindent}{15pt}
\setlength{\parskip}{5pt}

\DeclareMathOperator{\End}{End}

\DeclareMathOperator{\Ann}{Ann}

\DeclareMathOperator{\gr}{gr}

\DeclareMathOperator{\coker}{coker}

\newcommand{\D}{\mathcal{D}}

\newtheorem{thm}{Theorem}[section]

\newtheorem{prop}[thm]{Proposition}
\newtheorem{lem}[thm]{Lemma}
\newtheorem{cor}[thm]{Corollary}

\theoremstyle{definition}
\newtheorem{definition}[thm]{Definition}

\theoremstyle{remark}
\newtheorem{remark}[thm]{Remark}

\theoremstyle{convention}

\begin{document}

\title[Holonomic $\D$-modules over power series rings]{Van den Essen's theorem on the de Rham cohomology of a holonomic $\D$-module over a formal power series ring}
\author{Nicholas Switala}
\address{School of Mathematics\\ University of Minnesota\\ 127 Vincent Hall\\ 206 Church St. SE\\
Minneapolis, MN 55455}
\email{swit0020@math.umn.edu}
\thanks{NSF support through grant DMS-0701127 is gratefully acknowledged.}
\subjclass[2010]{Primary 14F10, 14F40}
\keywords{$\D$-modules, de Rham cohomology}

\begin{abstract}
In this expository paper, we give a complete proof of van den Essen's theorem that the de Rham cohomology spaces of a holonomic $\D$-module are finite-dimensional in the case of a formal power series ring over a field of characteristic zero.  This proof requires results from at least five of van den Essen's papers as well as his unpublished thesis, and until now has not been available in a self-contained document.
\end{abstract}

\maketitle

\section{Introduction}\label{intro} 

Let $k$ be a field of characteristic zero, let $R = k[x_1, \ldots, x_n]$ be a polynomial ring over $k$, and let $\D = \D(R,k)$ be the ring of $k$-linear differential operators on $R$ (the \emph{Weyl algebra}).  To any finitely generated left $\D$-module $M$, we can associate its \emph{dimension} $d(M)$: if $M$ is nonzero, this dimension is an integer between $n$ and $2n$.  The left $\D$-modules of minimal dimension (those for which $d(M) = n$) are called \emph{holonomic}.  A basic result in the theory of $\D$-modules, due to Bernstein, states that the de Rham cohomology spaces of a holonomic left $\D$-module $M$ are finite-dimensional over $k$.  These spaces are the cohomology objects of a complex defined using the usual exterior derivative formulas with respect to the action of the partial derivatives $\partial_1, \ldots, \partial_n \in \D$ on $M$.  The key idea in the proof of this finiteness is that the kernel and cokernel of $\partial_n$ acting on $M$ are holonomic $\D_{n-1}$-modules, where $\D_{n-1} = \D(k[x_1, \ldots, x_{n-1}],k)$; with this statement in hand, the finiteness of the de Rham cohomology follows by a routine induction.   

Now consider the case where $R$ is a formal power series ring $k[[x_1, \ldots, x_n]]$, again over a field of characteristic zero.  We again have the ring $\D = \D(R,k)$ of $k$-linear differential operators on $R$, and a well-defined notion of dimension for finitely generated left $\D$-modules (hence a notion of holonomy for left $\D$-modules).  In this case, the analogue of Bernstein's result is due to van den Essen.  If $M$ is a holonomic left $\D$-module, its de Rham cohomology spaces are again finite-dimensional over $k$, just as in the polynomial case; in contrast to this case, however, it is not true in general that the cokernel of $\partial_n$ acting on $M$ is a holonomic $\D_{n-1}$-module, which makes the proof more difficult.  The kernel of $\partial_n$ is again holonomic, and the cokernel is holonomic whenever $M$ satisfies a certain generic condition called \emph{$x_n$-regularity}.  It turns out that if $M$ is holonomic, we can always make a linear change of coordinates (which does not affect de Rham cohomology) after which $M$ becomes $x_n$-regular.  The same routine induction argument used by Bernstein is then sufficient to prove finiteness of the de Rham cohomology in the formal power series case as well:

\begin{thm}\cite[Prop. 2.2]{essen}\label{mainthm}
Let $k$ be a field of characteristic zero, let $R = k[[x_1, \ldots, x_n]]$ be a formal power series ring over $k$, and let $\D = \D(R,k)$ be the ring of $k$-linear differential operators on $R$.  If $M$ is a holonomic left $\D$-module, its de Rham cohomology spaces $H^i_{dR}(M)$ are finite-dimensional over $k$ for all $i$.
\end{thm}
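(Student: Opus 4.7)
The plan is to prove Theorem \ref{mainthm} by induction on the number of variables $n$, mirroring Bernstein's argument in the polynomial case as sketched in the introduction. The base case $n = 0$ is trivial, since then $R = k$ and the de Rham complex of any finite-dimensional $k$-vector space $M$ consists of $M$ alone in degree zero.

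For the inductive step I would first isolate, as separate lemmas, three key facts about a holonomic left $\D$-module $M$: (i) the kernel of the action of $\partial_n$ on $M$ is a holonomic left $\D_{n-1}$-module, where $\D_{n-1} = \D(k[[x_1,\ldots,x_{n-1}]],k)$; (ii) if $M$ is $x_n$-regular, then the cokernel of $\partial_n$ on $M$ is likewise a holonomic left $\D_{n-1}$-module; and (iii) one can always find a linear change of coordinates on $R$, which preserves de Rham cohomology, after which $M$ becomes $x_n$-regular. All three statements are explicitly flagged in the introduction as the substantive inputs due to van den Essen.

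Granting (i)--(iii), the remainder of the argument is essentially formal. First apply (iii) to reduce to the case that $M$ is $x_n$-regular, so that (i) and (ii) both apply. The de Rham complex of $M$ over $R$ can be written as the total complex of a double complex in which one differential is the action of $\partial_n$ and the other is the de Rham differential in the variables $x_1, \ldots, x_{n-1}$. Filtering by rows produces a short exact sequence of complexes yielding a long exact sequence that relates the spaces $H^i_{dR}(M)$ to the $\D_{n-1}$-de Rham cohomology spaces of $\ker(\partial_n)$ and $\coker(\partial_n)$. Since (i) and (ii) say that the latter two modules are holonomic over $\D_{n-1}$, the inductive hypothesis shows that their de Rham cohomology spaces are finite-dimensional over $k$, and hence so are the $H^i_{dR}(M)$.

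The main obstacle is therefore not the induction itself but the proof of the three structural facts. Facts (i) and (ii) rely on a careful analysis of good filtrations on $M$ and the way dimension behaves under the passage to $\ker(\partial_n)$ and $\coker(\partial_n)$; the asymmetry between them reflects the phenomenon, emphasized in the introduction, that without $x_n$-regularity the cokernel need not even be finitely generated over $\D_{n-1}$. Fact (iii), which has no counterpart in Bernstein's polynomial argument, is the most delicate: one must show both that $x_n$-regularity can be achieved by some linear change of coordinates for an arbitrary holonomic $M$, and that such coordinate changes do not affect the de Rham cohomology spaces. These are precisely the ingredients that, according to the abstract, have to be assembled from several papers and the thesis of van den Essen, and they account for the bulk of the technical work in the exposition that follows.
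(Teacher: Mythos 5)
Your proposal is correct and follows essentially the same route as the paper: induction on $n$, a coordinate change (justified by Proposition \ref{dRind} and Lemma \ref{reglink} via Proposition \ref{fgloc}) to achieve $x_n$-regularity, holonomy of the kernel and cokernel of $\partial_n$ (Propositions \ref{holkernel} and \ref{coker}), and the long exact sequence of Lemma \ref{derhamles}. The only quibble is your aside that without $x_n$-regularity the cokernel ``need not even be finitely generated over $\D_{n-1}$''; the counterexample in Proposition \ref{counterex} asserts only failure of holonomy, but this does not affect your argument.
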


Van den Essen's proof is not contained in a single paper.  The complete argument requires results from at least five of his papers, as well as his (unpublished) thesis.  Moreover, some of the necessary results are proved more than once in these papers, with simpler and better proofs superseding more complicated ones.  The purpose of this expository paper is to assemble these preliminary results and proofs in one place, giving only the shortest argument in each case.  

Ideally, this paper would be entirely self-contained except for basic commutative algebra, but the amount of necessary background material on $\D$-modules is too large for this ideal to be reasonable.  Our compromise is the following.  Bj\"{o}rk's book \cite{bjork} is our basic reference for the theory of $\D$-modules (it is also the basic reference cited in van den Essen's papers), and we freely quote without proof results appearing in this book.  We will also appeal to Gabber's deep result \cite[Thm. I]{gabber} on the involutivity of characteristic ideals without providing a proof.  We will, however, give full proofs for all preliminary results taken from van den Essen's papers.  We stress that nothing in this paper is original, neither the results nor the proofs; our goal in writing it is merely to make available a complete proof of Theorem \ref{mainthm} in one document.       

In this paper, we only state and prove precisely what we need for Theorem \ref{mainthm}.  The papers of van den Essen cited here contain many more results on kernels and cokernels of differential operators that are not strictly necessary for the proof of this theorem, and we encourage the interested reader to investigate further.

The structure of this paper is as follows.  In section \ref{prelim}, we collect preliminary material on formal power series rings, $\D$-modules, de Rham cohomology, and Gabber's theorem.  In section \ref{kernels}, we give the proof that the kernel of $\partial_n$ acting on a holonomic $\D$-module is again holonomic (with no further conditions on the module).  In section \ref{regularity}, we define the $x_n$-regularity condition for a holonomic $\D$-module and prove some technical results concerning the consequences of this condition.  In section \ref{cokernels}, we give the proof that (possibly after a linear change of coordinates) the cokernel of $\partial_n$ acting on a holonomic $\D$-module is again holonomic, and then complete the proof of Theorem \ref{mainthm}.  

\section{Preliminaries}\label{prelim}

Throughout the paper, $k$ denotes a field of characteristic zero, $R$ denotes the formal power series ring $k[[x_1, \ldots, x_n]]$, and $R_{n-1}$ denotes the subring $k[[x_1, \ldots, x_{n-1}]]$.  The rings $R$ and $R_{n-1}$ are commutative, Noetherian, regular local rings.  We denote by $\mathfrak{m}$ the unique maximal ideal $(x_1, \ldots, x_n)$ of $R$ (similarly, $\mathfrak{m}_{n-1}$ is the unique maximal ideal of $R_{n-1}$).  Since $R$ is local, any element of $R$ with a nonzero constant term is a unit.

\begin{definition}\label{xnregf}
A formal power series $f \in R$ is said to be \emph{$x_n$-regular} if $f(0,0,\ldots, 0, x_n) \neq 0$ in $k[[x_n]]$, that is, if a term $c_{0,\ldots, 0, i}x_n^i$ with $c_{0,\ldots, 0, i} \in k \setminus \{0\}$ occurs in $f$.
\end{definition}

The following theorem clarifies the significance of the $x_n$-regularity hypothesis:

\begin{thm}[Weierstrass preparation theorem]\cite[Thm. IV.9.2]{lang}\label{weierprep}
Suppose that $f \in R$ is $x_n$-regular.  There exists a unique expression $f = u(x_n^d + b_{n-1}x_n^{d-1} + \cdots + b_0)$ where $u \in R$ is a unit and each $b_i \in \mathfrak{m}_{n-1}$.
\end{thm}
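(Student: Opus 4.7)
The plan is to deduce preparation from the (stronger) Weierstrass division theorem. Let $d$ denote the $x_n$-order of $f$, i.e., the smallest $i$ with $c_{0,\ldots,0,i} \neq 0$. I will first prove that for every $g \in R$ there exist unique $q \in R$ and $r \in R_{n-1}[x_n]$ with $\deg_{x_n}(r) < d$ such that $g = qf + r$, and then extract preparation by applying this to $g = x_n^d$.

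For division, decompose $f = u\,x_n^d + P_0$, where $u := \sum_{j \geq 0} a_{d+j}\, x_n^j$ is a unit in $R$ (its constant term is $c_{0,\ldots,0,d} \neq 0$) and $P_0 := \sum_{i<d} a_i\, x_n^i$, with each $a_i \in \mathfrak{m}_{n-1}$ by minimality of $d$. Define $k$-linear operators $\alpha,\beta \colon R \to R$ on expansions $g = \sum_i g_i x_n^i$ with $g_i \in R_{n-1}$ by $\alpha(g) = \sum_{i<d} g_i x_n^i$ and $\beta(g) = \sum_{i \geq d} g_i x_n^{i-d}$, so that $g = \alpha(g) + x_n^d \beta(g)$. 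Splitting $g = q(u x_n^d + P_0) + r$ (with $\alpha(r) = r$) into its $\alpha$- and $\beta$-parts yields $r = \alpha(g) - \alpha(q P_0)$ and $(I + T)\, q = u^{-1}\beta(g)$, where $T(q) := u^{-1}\beta(q P_0)$.

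To invert $I + T$, I work in the $\mathfrak{m}_{n-1}$-adic topology on $R$. Via the identification $R = R_{n-1}[[x_n]]$, a Cauchy sequence in this topology converges coefficient-by-coefficient in $x_n$, so $R$ is $\mathfrak{m}_{n-1}$-adically complete. The operators $\beta$ and multiplication by $u^{-1}$ preserve the associated order $\nu_{n-1}$, while multiplication by $P_0$ raises $\nu_{n-1}$ by at least one since $P_0$ has coefficients in $\mathfrak{m}_{n-1}$. Hence $T$ is a strict contraction, $\sum_{j \geq 0} (-T)^j$ converges to an inverse of $I+T$, and the division theorem follows (uniqueness is automatic since a contraction has at most one fixed point: from $(I+T)(q_1 - q_2) = 0$ one gets $\nu_{n-1}(q_1 - q_2) \geq \nu_{n-1}(q_1 - q_2) + 1$).

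For preparation, apply division to $g = x_n^d$ to get $x_n^d = qf + r$, and reduce modulo $\mathfrak{m}_{n-1} R$. The quotient is $k[[x_n]]$, the image $\bar f$ has $x_n$-order exactly $d$, and $\bar r$ has $x_n$-degree less than $d$, so $x_n^d - \bar r = \bar q\, \bar f$ forces $\bar r = 0$ and $\bar q(0) \neq 0$. Thus the coefficients of $r$ lie in $\mathfrak{m}_{n-1}$ and $q$ is a unit in $R$, so $f = uP$ with $u := q^{-1}$ and $P := x_n^d - r$ is the desired factorization; uniqueness of $(u, P)$ follows from uniqueness in the division theorem applied to $g = x_n^d$, since any such factorization of $f$ gives rise to the pair $(u^{-1}, x_n^d - P)$. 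The principal obstacle is the contraction argument — the naive $\mathfrak{m}$-adic topology fails because $\beta$ can lower $\mathfrak{m}$-adic order by as much as $d$, and the crucial idea is to isolate the $\mathfrak{m}_{n-1}$-coordinates, in which $R$ remains complete and in which $T$ does become strictly contractive.
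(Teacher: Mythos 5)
Your argument is correct, but there is nothing in the paper to compare it against: the statement is quoted from Lang without proof and is used only through the consequence recorded in Remark \ref{wpfingen}. What you give is the standard argument (essentially the one in Lang): prove Weierstrass division by splitting $f = u\,x_n^d + P_0$ and inverting $I+T$ by successive approximation in the $\mathfrak{m}_{n-1}R$-adic topology --- where, as you rightly emphasize, $R = R_{n-1}[[x_n]]$ remains complete and $T$ is genuinely contractive, unlike in the $\mathfrak{m}$-adic topology --- and then specialize division to $g = x_n^d$. The only step I would make explicit is that in the uniqueness claim the exponent $d$ is itself forced before your reduction to uniqueness of division applies: reducing any factorization $f = u'(x_n^{d'} + b'_{d'-1}x_n^{d'-1} + \cdots + b'_0)$ modulo $\mathfrak{m}_{n-1}R$ shows that $d'$ equals the $x_n$-order of $f(0,\ldots,0,x_n)$, hence $d' = d$, and only then does the competing factorization produce a division of the same element $x_n^d$ by $f$.
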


\begin{remark}\label{wpfingen}
The Weierstrass preparation theorem has the following consequence: if $f \in R$ is $x_n$-regular, then $R/fR$ is finitely generated (by the classes of $x_n^i$ with $0 \leq i \leq d-1$) as a module over $R_{n-1}$.  It follows that \emph{any} finitely generated $R/fR$-module is in fact a finitely generated $R_{n-1}$-module.  In the sequel, our appeals to the ``Weierstrass preparation theorem'' will in fact be appeals to this consequence.
\end{remark}

We now review some definitions and properties of $\D$-modules and de Rham complexes.  Our basic reference for what follows is \cite{bjork}.  The ring $\D = \D(R,k)$ of $k$-linear differential operators, a subring of $\End_k(R)$, takes the form $\D = R\langle \partial_1, \ldots, \partial_n \rangle$, where $\partial_i = \frac{\partial}{\partial x_i}$.  (This notation is meant to indicate that, after adjoining the new variables $\partial_i$ to $R$, we do \emph{not} obtain a commutative ring.) As a left $R$-module, $\D$ is free on monomials in the $\partial_i$.  If $R$ is any commutative ring and $A \subset R$ any commutative subring, there is a more general definition (\cite[\S 16]{EGAIV}) of the ring $\D(R,A)$ of $A$-linear differential operators on $R$.  See \cite[Thm. 16.11.2]{EGAIV} for a proof that our definition coincides with this more general one in the formal power series case.

Unless expressly indicated otherwise, by a \emph{$\D$-module} we will always mean a \emph{left} module over $\D$.

The ring $\D$ has an increasing, exhaustive filtration $\{\D_j\}$, called the \emph{order filtration}, where $\D_j$ is the $R$-submodule consisting of those differential operators of order $\leq j$ (the order of an element of $\D$ is the maximum of the orders of its summands, and the order of a single summand $\rho \partial_1^{a_1} \cdots \partial_n^{a_n}$ with $\rho \in R$ is $\sum a_i$).  Note that for all $f \in R$ and for all $i$, we have the relation 
\[
[\partial_i, f] = \partial_i f - f \partial_i = \partial_i(f) \in R \subset \D,
\]
where $[,]$ denotes the commutator of two elements of $\D$ and the operator $\partial_i(f) \in \D$ is \emph{multiplication} by $\partial_i(f) \in R$.  Consequently, the associated graded object $\gr \D = \oplus_j \D_j/\D_{j-1}$ with respect to the order filtration is isomorphic to the polynomial ring $R[\zeta_1, \ldots, \zeta_n]$, where $\zeta_i$ is the image of $\partial_i$ in $\D_1/\D_0 \subset \gr \D$.  (In particular, $\gr \D$ is commutative.) For all $i$, $\zeta_i$ is called the \emph{principal symbol} of $\partial_i$, and we write $\zeta_i = \sigma(\partial_i)$.  More generally, if $\delta \in \D_j \setminus \D_{j-1}$, its principal symbol $\sigma(\delta)$ is its class in $\gr^j \D = \D_j/\D_{j-1} \subset \gr \D$.  

If $M$ is a finitely generated $\D$-module, there exists an increasing, exhaustive filtration $\{M_j\}$ of $M$ such that $M$ becomes a filtered $\D$-module with respect to the order filtration on $\D$ (so $\D_i \cdot M_j \subset M_{i+j}$ for all $i$ and $j$) \emph{and} $\gr M = \oplus_j M_j/M_{j-1}$ is a finitely generated $\gr \D$-module.  We call such a filtration \emph{good}.  Let $J$ be the radical of $\Ann_{\gr \D} \gr M$ (the \emph{characteristic ideal} of $M$) and set $d(M) = \dim \, (\gr\D)/J$ where $\dim$ denotes Krull dimension.  The ideal $J$, and hence the number $d(M)$, is independent of the choice of good filtration on $M$.  By \emph{Bernstein's theorem}, if $M$ is a (nonzero) finitely generated $\D$-module, we have $n \leq d(M) \leq 2n$.  In the case where $d(M) = n$ we say that $M$ is \emph{holonomic}.  

Some basic facts about holonomic modules are the following: submodules and quotients of holonomic $\D$-modules are holonomic, an extension of a holonomic $\D$-module by another holonomic $\D$-module is holonomic, holonomic $\D$-modules are of finite length over $\D$, and holonomic $\D$-modules are cyclic (generated over $\D$ by a single element).  We will use these basic facts below without comment.

If $M$ is a $\D$-module, the operator $\partial_n \in \D$ acts on $M$ via a $\D_{n-1}$-linear map, and so its kernel and cokernel are $\D_{n-1}$-modules.  The main question we will be concerned with in this paper is the following: if $M$ is holonomic, are the kernel and cokernel of $\partial_n$ acting on $M$ holonomic $\D_{n-1}$-modules?

Exactly the same question can be asked about the operator $x_n \in \D$.  This question is easier, and we have the following unconditional affirmative answer:

\begin{prop}\cite[Thm. 3.4.2, Prop. 3.4.4]{bjork}\label{bjorkxn}
Let $M$ be a holonomic $\D$-module. The kernel and cokernel of $x_n$ acting on $M$ are holonomic $\D_{n-1}$-modules.
\end{prop}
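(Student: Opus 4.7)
The plan is to lift a good $\D$-filtration on $M$ to filtrations on $K = \ker(x_n : M \to M)$ and $C = \coker(x_n : M \to M)$ that exhibit them as holonomic $\D_{n-1}$-modules. Take a good $\D$-filtration $\{M_j\}$ of $M$ and set $K_j = K \cap M_j$ and $C_j = (M_j + x_n M)/x_n M$. Since $[\partial_i, x_n] = 0$ for every $i < n$, the element $x_n$ lies in the centralizer of $\D_{n-1}$ inside $\D$, so $K$ is naturally a $\D_{n-1}$-submodule of $M$ and $C$ is a $\D_{n-1}$-quotient of $M$. The inclusion $(\D_{n-1})_i \subseteq \D_i$ then makes $\{K_j\}$ and $\{C_j\}$ into $\D_{n-1}$-filtrations of $K$ and $C$.

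Passing to the associated graded, I observe that $x_n \in \gr^{0}\D = R$ annihilates both $\gr K$ (by definition of $K$) and $\gr C$ (which is a graded quotient of $\gr M / x_n \gr M$). The inclusions $K_j/K_{j-1} \hookrightarrow M_j/M_{j-1}$ embed $\gr K$ as a graded $\gr \D_{n-1}$-submodule of the $x_n$-torsion in $\gr M$, and $\gr C$ is by construction a graded quotient of $\gr M / x_n \gr M$. The supports of $\gr K$ and $\gr C$, viewed inside $\gr \D$, therefore lie in the intersection of the characteristic variety $V(\sqrt{\Ann_{\gr \D} \gr M})$ with the hyperplane $\{x_n = 0\}$. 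By holonomicity the characteristic variety has Krull dimension $n$, so this intersection has dimension at most $n-1$ unless the characteristic variety is contained entirely in $\{x_n = 0\}$ — a degenerate case in which $x_n$ is nilpotent on $\gr M$, to be disposed of separately by a direct argument.

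The central obstacle I anticipate is promoting this support estimate over $\gr \D$ to the two conditions defining holonomicity over $\D_{n-1}$: that $\gr K$ and $\gr C$ are \emph{finitely generated} over $\gr \D_{n-1} = R_{n-1}[\zeta_1, \ldots, \zeta_{n-1}]$, and that they have Krull dimension at most $n-1$ as modules over that ring. A priori these graded modules live over the slightly larger ring $\gr \D / x_n \gr \D = R_{n-1}[\zeta_1, \ldots, \zeta_n]$, with a potentially non-trivial $\zeta_n$-action, and one must rule out the possibility that this extra variable contributes to the support dimension. I expect this step to require either a Noether-normalization-type argument within the graded rings, exploiting the low-dimensional support, or a more hands-on calculation using the commutator $[\partial_n, x_n] = 1$ to constrain how $\partial_n$ can act on $K$ and on $C$. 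Once that point is settled, $K$ and $C$ satisfy the defining conditions of holonomic $\D_{n-1}$-modules.
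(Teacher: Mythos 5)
A preliminary remark: the paper does not prove this proposition at all. It is one of the background results quoted verbatim from Bj\"ork's book (Thm.~3.4.2 and Prop.~3.4.4 there), which the introduction explicitly declares will be used without proof. So there is no in-paper argument to compare yours against; I can only judge the sketch on its own terms. The filtration setup you choose ($K_j = K \cap M_j$, $C_j = (M_j + x_nM)/x_nM$) is the standard and reasonable starting point, but the sketch has genuine gaps.

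The step you yourself label ``the central obstacle'' \emph{is} the theorem, and you do not carry it out. Observing that $\gr K$ and $\gr C$ are $\gr \D_{n-1}$-modules whose supports inside $\gr \D$ lie in the characteristic variety cut by $x_n = 0$ is the easy part; what must be proved is that they are finitely generated over $\gr \D_{n-1} = R_{n-1}[\zeta_1, \ldots, \zeta_{n-1}]$ (i.e.\ that your filtrations are good --- note $\gr K$ is a priori only an $S_{n-1}$-submodule, not an $S$-submodule, of $\gr M$, so its finite generation is not automatic) and that their dimension over that ring is at most $n-1$. Both require eliminating the variable $\zeta_n$, and neither ``a Noether-normalization-type argument'' nor ``a calculation with $[\partial_n, x_n]=1$'' is a proof until one is written down; for comparison, the analogous elimination of the extra variable in the $\partial_n$-statement (Proposition \ref{coker}) costs the paper Gabber's involutivity theorem, a Poisson-bracket computation, and an $x_n$-regularity hypothesis. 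Moreover, your dimension dichotomy is false as stated: the characteristic variety can have \emph{some} components contained in $\{x_n = 0\}$ without $x_n$ being nilpotent on $\gr M$ (take $M = R \oplus R_{x_n}/R$), and such a component is not shrunk by intersecting with $\{x_n=0\}$, so the intersection can have dimension $n$ even outside your ``degenerate case.'' Finally, that degenerate case is not degenerate but typical --- $M = R_{x_n}/R$ is holonomic with characteristic variety $V(x_n, \zeta_1, \ldots, \zeta_{n-1}) \subset \{x_n=0\}$, yet $\ker(x_n) \simeq R_{n-1}$ and $\coker(x_n) = 0$ are perfectly holonomic --- and the ``direct argument'' you defer to (in effect, a Kashiwara-type decomposition $M = \oplus_i \partial_n^i(\ker x_n)$ for modules supported on $V(x_n)$, or Bj\"ork's multiplicity bookkeeping) is not supplied. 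As written, the proposal records where the difficulty lies but does not resolve it.
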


We remark that in the polynomial ring case, $x_n$ and $\partial_n$ play essentially symmetric roles, and so the question for $\partial_n$ is no more difficult than the question for $x_n$ (and has the same unconditional affirmative answer).  The symmetry between $x_n$ and $\partial_n$ does not persist in the formal power series case, which is why the question for $\partial_n$ is significantly more difficult.

We next discuss the \emph{de Rham complex} of a $\D$-module $M$.  This is a complex of length $n$, denoted $M \otimes \Omega_R^{\bullet}$, whose objects are $R$-modules but whose differentials are merely $k$-linear.  It is defined as follows: for $0 \leq i \leq n$, $M \otimes \Omega^i_R$ is a direct sum of $n \choose i$ copies of $M$, indexed by $i$-tuples $1 \leq j_1 < \cdots < j_i \leq n$.  The summand corresponding to such an $i$-tuple will be written $M \, dx_{j_1} \wedge \cdots \wedge dx_{j_i}$.  

The $k$-linear differentials $d^i: M \otimes \Omega_R^i \rightarrow M \otimes \Omega_R^{i+1}$ are defined by 
\[
	d^i(m \,dx_{j_1} \wedge \cdots \wedge dx_{j_i}) = \sum_{s=1}^n \partial_s(m)\, dx_s \wedge dx_{j_1} \wedge \cdots \wedge dx_{j_i},
\]
with the usual exterior algebra conventions for rearranging the wedge terms, and extended by linearity to the direct sum.  The cohomology objects $h^i(M \otimes \Omega_R^{\bullet})$, which are $k$-spaces, are called the \emph{de Rham cohomology spaces} of the $\D$-module $M$, and are denoted $H^i_{dR}(M)$.  

We have defined the de Rham complex of a $\D$-module using a chosen regular system of parameters $x_1, \ldots, x_n$ for the formal power series ring $R$.  There is an alternate definition from which it is easier to see that this complex does not depend on the chosen parameters, based on the characterization of $\D$-modules in terms of \emph{integrable connections}.  Let $\Omega_R^1$ be the $R$-module of ($\mathfrak{m}$-adically) \emph{continuous} K\"{a}hler differentials of $R$ over $k$ \cite[20.7.14]{EGAIVa}, and $d: R \rightarrow \Omega_R^1$ the corresponding universal continuous derivation.  In coordinates, if $x_1, \ldots, x_n$ is a regular system of parameters for $R$, we have $\Omega_R^1 \simeq \oplus_i R \, dx_i$ and $d(f) = \sum_i \partial_i(f) \, dx_i$ for all $f \in R$.  However, $\Omega_R^1$ and $d$ can also be defined using a universal property, with no reference to coordinates: every $\mathfrak{m}$-adically continuous derivation $\delta: R \rightarrow M$ where $M$ is an $R$-module factors uniquely through $d$.  Now recall that a \emph{connection} on an $R$-module $M$ is a $k$-linear map $\nabla: M \rightarrow \Omega_R^1 \otimes_R M$ such that $\nabla(rm) = dr \otimes m + r \cdot \nabla(m)$ for all $r \in R$ and $m \in M$.  A connection $\nabla = \nabla^0$ on $M$ induces $k$-linear maps $\nabla^l: \Omega_R^l \otimes_R M \rightarrow \Omega_R^{l+1} \otimes_R M$ for all $l \geq 0$, where $\Omega_R^l$ is the $l$th exterior power of $\Omega_R^1$.  If $\nabla^1 \circ \nabla^0$ is the zero map, the connection $\nabla$ is said to be \emph{integrable}, in which case $\nabla^{l+1} \circ \nabla^l$ is the zero map for all $l$.  Since $\D$ is generated over $R$ by derivations, the data of a left $\D$-module structure on an $R$-module $M$ is equivalent to that of an integrable connection on $M$ \cite[3.2.9]{bjork}, and the complex $(\Omega_R^{\bullet} \otimes_R M, \nabla^{\bullet})$ induced by $\nabla$ is the \emph{de Rham complex} of $M$, which in coordinates $\{x_i\}$ is exactly the complex $M \otimes \Omega^{\bullet}_R$ defined above.  The only use we will have in this paper for this alternate definition of $M \otimes \Omega^{\bullet}_R$ is the following obvious consequence:

\begin{prop}\label{dRind}
The de Rham cohomology spaces $H^i_{dR}(M)$ of any $\D$-module $M$ are independent of the choice of a regular system of parameters $x_1, \ldots, x_n$ for $R$.
\end{prop}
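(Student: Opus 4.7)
The plan is essentially to unwind the coordinate-free description of the de Rham complex sketched just before the statement, and observe that at every step no choice of parameters has been made. Concretely, I would argue in four small steps.

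First, I would emphasize that the pair $(\Omega_R^1, d)$ is defined purely by a universal property: it is the target of the universal $\mathfrak{m}$-adically continuous $k$-derivation of $R$, as cited from \cite[20.7.14]{EGAIVa}. In particular, $\Omega_R^1$ and $d$ are intrinsically attached to the $k$-algebra $R$, with no reference to a regular system of parameters; and so are the exterior powers $\Omega_R^l = \bigwedge^l_R \Omega_R^1$.

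Second, I would invoke \cite[3.2.9]{bjork} to note that the datum of a left $\D$-module structure on the $R$-module $M$ is equivalent to the datum of an integrable connection $\nabla = \nabla^0 \colon M \to \Omega_R^1 \otimes_R M$, and that this equivalence is entirely intrinsic: it uses only that $\D$ is generated over $R$ by the continuous $k$-derivations of $R$, which is a coordinate-free statement. The induced maps $\nabla^l \colon \Omega_R^l \otimes_R M \to \Omega_R^{l+1} \otimes_R M$ are then defined by the standard formula in terms of $\nabla^0$ and the exterior derivative on $\Omega_R^\bullet$, again with no choice of coordinates. Since $\nabla$ is integrable, the $\nabla^l$ assemble into a complex $(\Omega_R^\bullet \otimes_R M, \nabla^\bullet)$ whose cohomology is a sequence of $k$-vector spaces depending only on $M$ as a $\D$-module.

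Third, given a regular system of parameters $x_1, \ldots, x_n$, I would verify the compatibility: under the identification $\Omega_R^1 \simeq \bigoplus_i R\, dx_i$, the derivation $d$ is $f \mapsto \sum_i \partial_i(f)\, dx_i$ and, writing an element of $\Omega_R^i \otimes_R M$ in the basis $dx_{j_1} \wedge \cdots \wedge dx_{j_i}$, the intrinsic differential $\nabla^i$ reduces to the formula
\[
	m \, dx_{j_1} \wedge \cdots \wedge dx_{j_i} \;\longmapsto\; \sum_{s=1}^n \partial_s(m)\, dx_s \wedge dx_{j_1} \wedge \cdots \wedge dx_{j_i}
\]
that defines the coordinate-dependent complex $M \otimes \Omega_R^\bullet$. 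Thus the complex constructed in coordinates agrees, on the nose, with the intrinsic complex $(\Omega_R^\bullet \otimes_R M, \nabla^\bullet)$.

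Fourth, I would conclude: since the intrinsic complex depends only on $(R, M)$ and not on a regular system of parameters, and since for every such choice it is canonically isomorphic to $M \otimes \Omega_R^\bullet$, the cohomology spaces $H^i_{dR}(M)$ are the same no matter which parameters $x_1, \ldots, x_n$ are used. There is no real obstacle here: the work has already been done in setting up the intrinsic description, and the only thing to check is the elementary verification in step three that the two differentials coincide in coordinates, which is immediate from the Leibniz rule.
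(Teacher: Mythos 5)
Your proposal is correct and follows exactly the route the paper intends: the paper derives this proposition as an ``obvious consequence'' of the coordinate-free description of the de Rham complex via integrable connections set up in the preceding paragraph, and your four steps simply spell out that derivation (intrinsic definition of $(\Omega_R^1, d)$, the equivalence with integrable connections from \cite[3.2.9]{bjork}, and the verification that the intrinsic differential recovers the coordinate formula). Nothing is missing.
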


There is a long exact sequence relating the de Rham cohomology of a $\D$-module $M$ with the de Rham cohomology of the kernel and cokernel of $\partial_n$ acting on $M$ (which are $\D_{n-1}$-modules):

\begin{lem}\cite[Prop. 2.4.13]{bjork}\label{derhamles}
Let $M$ be a $\D$-module.  Let $M_*$ (resp. $\overline{M}$) be the kernel (resp. cokernel) of $\partial_n$ acting on $M$.  Then there is a long exact sequence
\[
\cdots \rightarrow H^{i-2}_{dR}(\overline{M}) \rightarrow H^i_{dR}(M_*) \rightarrow H^i_{dR}(M) \rightarrow H^{i-1}_{dR}(\overline{M}) \rightarrow \cdots
\]
of $k$-spaces, where $H^j_{dR}(M_*)$ and $H^j_{dR}(\overline{M})$ are de Rham cohomology spaces of the $\D_{n-1}$-modules $M_*$ and $\overline{M}$, defined using only $\partial_1, \ldots, \partial_{n-1}$.
\end{lem}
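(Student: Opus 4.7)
The plan is to derive the stated long exact sequence from a short exact sequence of complexes, after identifying the cohomology of its quotient with a shift of the de Rham complex of $\overline{M}$. Writing any element of $M \otimes \Omega_R^i$ uniquely as $\alpha + \beta \wedge dx_n$ with $\alpha \in M \otimes \Omega_{R_{n-1}}^i$ and $\beta \in M \otimes \Omega_{R_{n-1}}^{i-1}$, a routine calculation from the defining formula for the de Rham differential gives
\[
d(\alpha + \beta \wedge dx_n) = d_{n-1}\alpha + \bigl((-1)^i \partial_n\alpha + d_{n-1}\beta\bigr) \wedge dx_n.
\]

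Because $\partial_n$ annihilates $M_*$, the natural inclusion $M_* \otimes \Omega_{R_{n-1}}^\bullet \hookrightarrow M \otimes \Omega_R^\bullet$ (embedding each form in $M_*$ as a form in $M$ with no $dx_n$-component) commutes with differentials. Let $Q^\bullet$ denote the quotient complex. The resulting short exact sequence of complexes yields a long exact sequence
\[
\cdots \to H^i_{dR}(M_*) \to H^i_{dR}(M) \to H^i(Q^\bullet) \to H^{i+1}_{dR}(M_*) \to \cdots
\]
in cohomology, so it suffices to prove $H^i(Q^\bullet) \cong H^{i-1}_{dR}(\overline{M})$; the stated sequence then follows after a trivial reindexing.

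For the identification, I would define a map $\pi: Q^\bullet \to \overline{M} \otimes \Omega_{R_{n-1}}^\bullet[-1]$ by $[\alpha + \beta \wedge dx_n] \mapsto \bar{\beta}$, the reduction of $\beta$ modulo $\partial_n M$. This map is well-defined (adding elements of $M_* \otimes \Omega_{R_{n-1}}^i$ does not affect $\beta$) and commutes with the differentials, since the term $(-1)^i \partial_n\alpha$ reduces to zero in $\overline{M}$. I would then show $\pi$ is a quasi-isomorphism: using the isomorphism $M/M_* \xrightarrow{\partial_n} \partial_n M$, the kernel of $\pi$ in degree $i$ becomes $(\partial_n M \otimes \Omega_{R_{n-1}}^i) \oplus (\partial_n M \otimes \Omega_{R_{n-1}}^{i-1})$, with an induced differential that (up to sign and shift) makes it the mapping cone of the identity on $\partial_n M \otimes \Omega_{R_{n-1}}^\bullet$. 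Since such a cone is acyclic, so is the kernel of $\pi$, which gives the desired quasi-isomorphism.

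The principal obstacle is the identification of this kernel as a (shifted) cone of an identity map; once that is done, acyclicity follows formally. The main subtlety is the sign $(-1)^i$ introduced when commuting $\partial_n$ past the $i$ wedge factors of $\alpha$, which must be tracked consistently throughout the argument.
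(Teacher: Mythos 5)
Your argument is correct, but note that the paper itself offers no proof of this lemma: it is one of the results quoted verbatim from Bj\"ork's book (Prop.\ 2.4.13) that the author explicitly declines to prove. So there is nothing in the paper to compare against; what can be said is that your route is the standard one and is sound. The decomposition $M \otimes \Omega_R^i = (M \otimes \Omega_{R_{n-1}}^i) \oplus (M \otimes \Omega_{R_{n-1}}^{i-1})\wedge dx_n$ and the formula $d(\alpha + \beta\wedge dx_n) = d_{n-1}\alpha + ((-1)^i\partial_n\alpha + d_{n-1}\beta)\wedge dx_n$ are exactly right for the sign conventions of Definition of $d^i$ in Section 2 (the new $dx_s$ is placed in front, so commuting $dx_n$ past $i$ factors produces $(-1)^i$); this exhibits $M\otimes\Omega_R^\bullet$ as the mapping cone, up to shift, of $\partial_n$ acting on $M\otimes\Omega_{R_{n-1}}^\bullet$, which is how Bj\"ork phrases it. Your subcomplex $M_*\otimes\Omega_{R_{n-1}}^\bullet$ is genuinely a subcomplex because $M_*$ is a $\D_{n-1}$-submodule and $\partial_n$ kills it, the quotient map $\pi$ to the shifted $\overline{M}$-complex is a chain map because $\partial_n\alpha$ dies in $\overline{M}$, and the kernel of $\pi$ is, after the $R_{n-1}$-linear (indeed $\D_{n-1}$-linear) identification $M/M_*\xrightarrow{\partial_n}\partial_n M$, the cone of $\pm\mathrm{id}$ on $\partial_n M\otimes\Omega_{R_{n-1}}^\bullet$: explicitly, a cocycle $(\alpha',\beta)$ with $d_{n-1}\alpha'=0$ and $(-1)^i\alpha'+d_{n-1}\beta=0$ is the coboundary of $((-1)^{i-1}\beta,0)$, so the kernel is acyclic and $\pi$ is a quasi-isomorphism. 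The resulting long exact sequence agrees with the one stated after the reindexing $H^i(Q^\bullet)\cong H^{i-1}_{dR}(\overline{M})$. The only point worth making explicit in a written version is that $\partial_n M$ and $M_*$ are stable under $\partial_1,\dots,\partial_{n-1}$ and under $R_{n-1}$ (which uses $\partial_n(r)=0$ for $r\in R_{n-1}$), so that all the complexes appearing are actually defined.
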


Finally, we will need Gabber's theorem on involutivity of characteristic ideals, originally proved in \cite{gabber}.  We need to introduce the Poisson bracket (see \cite{gabber} or \cite[App. D]{hotta}).  Its definition makes sense, and Gabber's theorem holds, for more general filtered rings, but we content ourselves here with stating everything for the ring $\D$ and its order filtration $\{\D_j\}$.  Suppose that $\delta \in \gr^i \D$ and $\delta' \in \gr^j \D$ for some $i$ and $j$.  Then we can write $\delta = \sigma(d)$ and $\delta' = \sigma(d')$ for some $d \in \D_i$ and $d' \in \D_j$, where $\sigma$ denotes the principal symbol.  Since $\gr \D$ is commutative, the commutator $[d,d']$ belongs to $\D_{i+j-1}$, and we define $\{\delta, \delta'\} \in \gr^{i+j-1} \D$ to be the principal symbol of $[d,d']$.  It is easy to check that $\{\delta, \delta'\}$ does not depend on the choices of $d$ and $d'$.  The unique biadditive extension
\[
\{,\}: \gr \D \times \gr \D \rightarrow \gr \D   
\]
is called the \emph{Poisson bracket} on $\gr \D$, and, in particular, is a biderivation.  An ideal $I \subset \gr \D$ is called \emph{involutive} if it is closed under the Poisson bracket, that is, if $\{I,I\} \subset I$.

\begin{thm}[Gabber's theorem]\cite[Thm. I]{gabber}\label{gabber}
Let $M$ be a finitely generated left $\D$-module.  Let $\gr M$ be the associated graded $\gr \D$-module of $M$ with respect to a chosen good filtration on $M$.  Let $J = \sqrt{\Ann_{\gr \D} \gr M} \subset \gr \D$ be the \emph{characteristic ideal} of $M$ (as stated earlier, $J$ does not depend on the choice of good filtration).  Then $J$ is involutive.
\end{thm}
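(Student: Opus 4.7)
The goal is to show $\{J, J\} \subset J$. Since $\gr M$ is graded, the annihilator $I := \Ann_{\gr \D} \gr M$ is a homogeneous ideal, hence so is its radical $J$; combining this with the biadditivity of the Poisson bracket reduces the task to showing that whenever $a \in \gr^i \D$ and $b \in \gr^j \D$ both lie in $J$, the element $\{a, b\}$ lies in $J$. By definition of the radical, we can choose $N \geq 1$ with $a^N, b^N \in I$.

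The plan is to pick homogeneous lifts $d \in \D_i$ and $d' \in \D_j$ with $\sigma(d) = a$ and $\sigma(d') = b$, and to analyze iterated commutators of $d$ and $d'$ acting on $M$ via a chosen good filtration $\{M_k\}$. The hypothesis $a^N \cdot \gr M = 0$ translates into a filtration-drop statement: $d^N \cdot M_k \subseteq M_{iN + k - 1}$ for every $k$, and symmetrically for $d'^N$. At the level of principal symbols, a Leibniz-type expansion gives the congruence
\[
\sigma([d^p, d'^q]) \equiv pq \cdot a^{p-1} b^{q-1} \{a, b\} \pmod{\gr^{i(p-1)+j(q-1)} \D}
\]
in the appropriate graded piece. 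Combining such identities for suitably chosen $p, q$ (comparable to $N$) with the filtration drops afforded by $d^N$ and $d'^N$, I would aim to produce an explicit $L = L(N, i, j)$ such that $\{a, b\}^L$ annihilates $\gr M$, yielding $\{a, b\} \in J$ as desired.

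The main obstacle is the combinatorial bookkeeping that upgrades the hypothesis ``$a^N$ and $b^N$ kill $\gr M$'' to the conclusion ``some explicit power of $\{a, b\}$ kills $\gr M$.'' One must track nested iterated commutators such as $[[d, d'], d']$ and $[d, [d, d']]$, verify that at each step the surviving principal symbol is a Poisson bracket of the expected order, and control the error terms arising from lower-order pieces of the filtration, all while ensuring no parasitic symbols survive to obstruct extraction of a pure power of $\{a, b\}$. This delicate inductive commutator calculus is precisely the technical heart of Gabber's original argument, and it is hard to see how to avoid it at this level of generality. A conceptually different route passes to the microlocalized framework, where involutivity becomes a geometric statement about the support of a coherent module on the cotangent bundle; that approach requires considerably more machinery and does not seem more efficient than the direct commutator computation for this particular statement.
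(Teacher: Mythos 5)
This statement is one the paper deliberately does not prove: the introduction states that Gabber's theorem will be quoted from \cite{gabber} ``without providing a proof,'' so there is no argument in the paper to compare yours against. Judged on its own terms, your proposal is not a proof but a strategy outline, and the step it leaves open is exactly the content of the theorem. You correctly reduce (using homogeneity of $J$ and biadditivity of the bracket) to showing $\{a,b\} \in J$ for homogeneous $a, b \in J$, and your translation of $a^N \in \Ann_{\gr\D}\gr M$ into the filtration drop $d^N \cdot M_k \subseteq M_{iN+k-1}$ is fine, as is the symbol computation $\sigma([d^p,d'^q]) = pq\,a^{p-1}b^{q-1}\{a,b\}$ \emph{when that element is nonzero}. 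But the plan then says you ``would aim to produce an explicit $L$ such that $\{a,b\}^L$ annihilates $\gr M$,'' and no such $L$ is produced. This is not a bookkeeping chore that can be deferred: the moment $a^{p-1}b^{q-1}\{a,b\}$ acts as zero on $\gr M$ --- which is precisely the situation your hypothesis forces for large $p,q$ --- the commutator $[d^p,d'^q]$ drops further in the order filtration, and its new principal symbol is an uncontrolled combination of higher iterated Poisson brackets rather than a power of $\{a,b\}$. The induction you propose does not visibly close, and there is no known elementary way to make it close; a single filtration drop of $1$ from $d^N$ and $d'^N$ is simply not enough information to extract a pure power of $\{a,b\}$ in $\Ann_{\gr\D}\gr M$.

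Indeed, this is why the theorem resisted proof for some time and why the paper treats it as a black box alongside the foundational material from \cite{bjork}. Gabber's actual argument is not a direct commutator calculus of the kind you sketch: it passes through a homogenization of the filtered ring, a careful study of the set of ``orders'' of elements of a left ideal, and a positivity/ordering lemma that controls exactly the error terms you flag as the obstacle. Your closing paragraph honestly identifies the gap, but identifying the hard step is not the same as carrying it out. To repair the write-up in the context of this paper, the right move is the one the paper itself makes: cite \cite[Thm.\ I]{gabber} (or \cite[App.\ D]{hotta}) for the theorem rather than attempt to reprove it, and reserve the elementary commutator manipulations for the corollary about minimal primes (Corollary \ref{primeinvol}), where they genuinely suffice.
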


\begin{cor}\cite[Lemma 1.12]{essen}\label{primeinvol}
With the notation of Theorem \ref{gabber}, let $\mathfrak{p}$ be a minimal prime ideal over $J$.  Then $\mathfrak{p}$ is again involutive.
\end{cor}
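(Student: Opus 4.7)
The plan is to exploit the fact that $J$, being a radical ideal in the Noetherian ring $\gr \D$, decomposes as a finite intersection $J = \mathfrak{p}_1 \cap \cdots \cap \mathfrak{p}_r$ of its (finitely many) minimal primes, and to multiply elements of $\mathfrak{p}$ by an auxiliary element lying in all of the \emph{other} minimal primes. Without loss of generality assume $\mathfrak{p} = \mathfrak{p}_1$. If $r = 1$, then $\mathfrak{p} = J$ and the result is immediate from Theorem \ref{gabber}, so assume $r > 1$.

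Since the minimal primes of $J$ are pairwise incomparable, $\bigcap_{j > 1} \mathfrak{p}_j$ is not contained in $\mathfrak{p}_1$ (otherwise, $\mathfrak{p}_1$ would contain some $\mathfrak{p}_j$ with $j > 1$, as a prime ideal containing a finite intersection of ideals contains one of them). Pick $s \in \bigcap_{j > 1} \mathfrak{p}_j$ with $s \notin \mathfrak{p}_1$. Given any $a, b \in \mathfrak{p} = \mathfrak{p}_1$, the element $sa$ lies in every $\mathfrak{p}_j$ (in $\mathfrak{p}_1$ because $a$ does, and in $\mathfrak{p}_j$ for $j > 1$ because $s$ does); hence $sa \in J$, and likewise $sb \in J$. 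By the involutivity of $J$ (Theorem \ref{gabber}), $\{sa, sb\} \in J \subset \mathfrak{p}_1$.

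Expanding using the biderivation property of the Poisson bracket,
\[
\{sa, sb\} = s^2\{a,b\} + sb\{a,s\} + as\{s,b\} + ab\{s,s\}.
\]
Each of the last three summands has $a$ or $b$ as an explicit factor and so lies in $\mathfrak{p}_1$. Therefore $s^2\{a,b\} \in \mathfrak{p}_1$, and since $s \notin \mathfrak{p}_1$ and $\mathfrak{p}_1$ is prime, we conclude $\{a,b\} \in \mathfrak{p}_1 = \mathfrak{p}$, as desired.

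The main obstacle is avoiding the tempting but unsalvageable ``direct'' approach in which one takes $a, b \in \mathfrak{p}$, uses Noetherianity to find $N$ with $a^N, b^N \in J$, and expands $\{a^N, b^N\} = N^2 a^{N-1} b^{N-1} \{a,b\} \in J \subset \mathfrak{p}$: the leftover factor $a^{N-1} b^{N-1}$ already lies in $\mathfrak{p}$ and so yields no information about $\{a,b\}$. The key insight is rather to multiply by an element $s$ lying \emph{outside} $\mathfrak{p}$, so that the leftover factor after a single Leibniz expansion is a power of $s$ and can be cancelled by primality.
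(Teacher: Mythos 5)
Your proof is correct and follows essentially the same route as the paper: decompose $J$ into its minimal primes, choose an auxiliary element $s$ (the paper calls it $c$) in the intersection of the other minimal primes but outside $\mathfrak{p}$, apply involutivity of $J$ to $\{sa,sb\}$, and expand via the biderivation property to isolate $s^2\{a,b\}$ modulo $\mathfrak{p}$. No changes needed.
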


\begin{proof}
As $J$ is a radical ideal in a Noetherian ring, we can write $J$ as an intersection $J = \mathfrak{p}_1 \cap \cdots \cap \mathfrak{p}_t$ of prime ideals of $\gr \D$, and we may assume that each $\mathfrak{p}_i$ is minimal over $J$.  If $t=1$, $J$ is prime and there is nothing to prove.  Otherwise, fix $i$, write $\mathfrak{p}$ for $\mathfrak{p}_i$, and let $\mathfrak{q} = \cap_{j \neq i} \mathfrak{p}_j$.  There exists some $c \in \mathfrak{q} \setminus \mathfrak{p}$.  Let $a, b \in \mathfrak{p}$ be given.  We have $ac, bc \in \cap_j \mathfrak{p}_j = J$, and hence $\{ac,bc\} \in J$, since $J$ is involutive by Theorem \ref{gabber}.  We now use the fact that the Poisson bracket is a biderivation, which gives
\[
\{ac,bc\} = a\{c,b\}c + a\{c,c\}b + c\{a,b\}c + c\{a,c\}b.
\]
The first, second, and fourth summands on the right-hand side all have a factor of $a$ or $b$ and thus belong to $\mathfrak{p}$.  Since the sum belongs to $\mathfrak{p}$, the third summand, $c^2\{a,b\}$, belongs to $\mathfrak{p}$ as well.  As $\mathfrak{p}$ is prime and $c^2 \notin \mathfrak{p}$, we must have $\{a,b\} \in \mathfrak{p}$, and so $\mathfrak{p}$ is involutive.
\end{proof}

\begin{remark}\label{zetahom}
If $M$, $J$, and $\mathfrak{p}$ are as above, we note that $J$ is homogeneous with respect to $\zeta_1, \ldots, \zeta_n$ by definition, and by \cite[Thm. 13.7(i)]{matsumura}, $\mathfrak{p}$ is homogeneous with respect to the $\zeta_i$ as well.
\end{remark}

\section{Kernels}\label{kernels}

In this section, we prove that the kernel $M_*$ of $\partial_n$ acting on a holonomic $\D$-module $M$ is a holonomic $\D_{n-1}$-module, with no further conditions on $M$.  After some reductions, it suffices to check that the \emph{cokernel} of $x_n$ is holonomic, which is Proposition \ref{bjorkxn}.  These reductions are made possible by the following key lemma, which states that $R$-linear dependence relations among elements of $M_*$ hold homogeneously in $x_n$.  (We write $\partial$ for $\partial_n$; this shorthand will be used throughout the rest of the paper.)

\begin{lem}[Key lemma for kernels]\cite[Lemme 1]{kernel}\label{kerlemma}
Let $M$ be any $\D$-module, and let $M_* = \ker(\partial: M \rightarrow M)$.  Suppose that $m_1, \ldots, m_l \in M_*$ are such that $f_1m_1 + \cdots + f_lm_l = 0$ for some $f_1, \ldots, f_l \in R$.  Then $f_{1,j}m_1 + \cdots + f_{l,j}m_l = 0$ for every $j \geq 0$, where $f_{i,j} \in R_{n-1}$ denotes the coefficient of $x_n^j$ in $f_i$.
\end{lem}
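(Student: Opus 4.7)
The plan is to transport the question into the finitely generated $R$-submodule $\tilde{M} := R m_1 + \cdots + R m_l$ of $M$, so that Krull's intersection theorem becomes available. This submodule contains the given relation together with every quantity whose vanishing we wish to establish; moreover, since $\partial_n(m_i) = 0$, one checks immediately that $\partial_n(\tilde{M}) \subseteq \tilde{M}$, so the operator $\partial_n$ acts on $\tilde{M}$ and the entire argument can be carried out there. Write $v_j := \sum_i f_{i,j} m_i \in M_* \cap \tilde{M}$; the conclusion of the lemma is that each $v_j = 0$.

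The key calculation is as follows. Fix $k_0 \geq 0$ and an arbitrary $N \geq 0$. Using the $x_n$-adic expansion in $R = R_{n-1}[[x_n]]$, split each $f_i = p_i + q_i$ where $p_i = \sum_{j=0}^{k_0 + N} f_{i,j} x_n^j$ and $q_i \in x_n^{k_0+N+1} R$. Leibniz's rule together with $\partial_n(m_i) = 0$ collapses $\partial_n^{k_0}(\sum_i f_i m_i) = 0$ to $\sum_i \partial_n^{k_0}(f_i) m_i = 0$. Since $\partial_n^{k_0}$ lowers the $x_n$-adic order by exactly $k_0$, the contribution of the $q_i$'s lies in $x_n^{N+1} \tilde{M}$, while the contribution of the $p_i$'s rearranges to a finite sum in the $v_j$'s. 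This gives
\[
\sum_{s=0}^{N} \frac{(k_0+s)!}{s!}\, x_n^s\, v_{k_0 + s} \;\in\; x_n^{N+1} \tilde{M}.
\]

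The conclusion follows by induction on $N$, with induction hypothesis ``$v_j \in x_n^N \tilde{M}$ for all $j \geq 0$''. The base case $N = 0$ is trivial. For the inductive step, fix $k_0$ and apply the displayed containment: by hypothesis, each term $x_n^s v_{k_0+s}$ with $s \geq 1$ lies in $x_n^{N+s} \tilde{M} \subseteq x_n^{N+1} \tilde{M}$, forcing the $s = 0$ term $k_0!\, v_{k_0}$ to lie in $x_n^{N+1} \tilde{M}$ as well. Since we are in characteristic zero, $v_{k_0} \in x_n^{N+1} \tilde{M}$. Letting $N$ range over all nonnegative integers and invoking Krull's intersection theorem on the finitely generated $R$-module $\tilde{M}$ (using $x_n \in \mathfrak{m}$, so $\bigcap_N x_n^N \tilde{M} \subseteq \bigcap_N \mathfrak{m}^N \tilde{M} = 0$) gives $v_{k_0} = 0$, as required.

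The hardest point is recognizing that one cannot argue directly in $M$: for a general $\D$-module $M$—for instance if $x_n$ acts invertibly—the intersection $\bigcap_N x_n^N M$ need not vanish, and the glib conclusion ``$v_{k_0}$ lies in every power of $x_n$, hence is zero'' is simply false. Restricting to the finitely generated submodule $\tilde{M}$, which is available precisely because only finitely many $m_i$ appear in the relation, is the crucial move that brings the Noetherian finiteness of $R$, and hence Krull's intersection theorem, into play.
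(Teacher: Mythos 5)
Your proof is correct; I checked the key congruence $\sum_{s=0}^{N}\frac{(k_0+s)!}{s!}\,x_n^s v_{k_0+s}\in x_n^{N+1}\tilde{M}$ (it follows exactly as you say from Leibniz, $\partial(m_i)=0$, and the fact that $\partial^{k_0}$ drops the $x_n$-adic order of $q_i\in x_n^{k_0+N+1}R$ by at most $k_0$), and the simultaneous induction on $N$ together with Krull's intersection theorem on the finitely generated submodule $\tilde{M}=Rm_1+\cdots+Rm_l$ closes the argument. The toolkit is the same as the paper's --- differentiate the relation, collapse via Leibniz, work inside the finitely generated $R$-submodule generated by the $m_i$, and finish with Krull --- but the induction is organized differently. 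The paper first reduces everything to the constant term $j=0$ (by applying $\partial^j$ to the original relation), and then proves $m=\sum_i f_{i,0}m_i\in x_n^qE$ by induction on $q$; its inductive step requires rewriting $m$ as $x_n^q\sum_i r_im_i$ with \emph{fresh, uncontrolled} coefficients $r_i$ at each stage, and exploits $\partial(m)=0$ by applying $\partial^q$ to that new expression. You instead run a single induction on the $x_n$-adic order, carrying \emph{all} the elements $v_j$ along simultaneously, and only ever differentiate the original relation; the explicit truncation $f_i=p_i+q_i$ replaces the paper's re-expression step. Your version is arguably cleaner in that no new coefficients appear and all $j$ are handled uniformly in one pass; the paper's version isolates the $j=0$ statement, which is the form in which it is actually invoked later (in the proof that $M_*\cap x_nM=0$). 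Your closing observation --- that one cannot take the intersection of the $x_n^NM$ in $M$ itself, and that restricting to the finitely generated submodule is the essential move --- is exactly the point of the paper's introduction of $E$.
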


\begin{proof}
We first assume that the statement holds for $j = 0$ and prove that it follows for all $j > 0$.  Note that for any $j \geq 0$ and any $f \in R$, we have $f_j = \frac{1}{j!}(\partial^j f)_0$, where the subscript $0$ denotes the constant term with respect to $x_n$.  If $m_1, \ldots, m_l \in M_*$ and $f_1m_1 + \cdots + f_lm_l = 0$, then $\partial^j(f_1m_1 + \cdots + f_lm_l) = 0$.  By the Leibniz rule, 
\[
\partial^j(\sum_{i=1}^l f_im_i) = \sum_{i=1}^l (\partial^j(f_i)) m_i,
\]
as all other terms have a factor of $\partial^{\alpha}(m_i)$ for some $\alpha > 0$ and some $i$ and hence vanish since $m_i \in M_*$.  Multiplying by a harmless constant, we see that $\frac{1}{j!} \sum_{i=1}^l (\partial^j(f_i)) m_i = 0$.  By our assumption, $\sum_{i=1}^l (\frac{1}{j!}\partial^j(f_i))_0 m_i = 0$, but this sum is nothing but $\sum_{i=1}^l f_{i,j}m_i$.  We have thus reduced ourselves to the case $j=0$.   

Let $m = \sum_{i=1}^l f_{i,0} m_i$, and let $E$ be the $R$-submodule of $M$ generated by $\{m_1, \ldots, m_l\}$.  We claim that $m \in x_n^q E$ for all $q \geq 1$.  As $E$ is a finitely generated $R$-module and $R$ is a Noetherian local domain (whose maximal ideal contains $x_n$), this will imply that $m = 0$ \cite[Thm. 8.10(ii)]{matsumura}, as desired.  We prove $m \in x_n^q E$ for all $q$ by induction on $q$.  For the base case, $q=1$, consider the difference $\sum_{i=1}^l f_i m_i - \sum_{i=1}^l f_{i,0}m_i$.  On the one hand, by definition, this difference belongs to $x_nE$, as we have removed all terms which \textit{a priori} may lack an $x_n$ factor.  On the other hand, the first term is $0$ by hypothesis and the second is $m$, so $-m \in x_nE$.  Now suppose that $m \in x_n^qE$ for some $q \geq 1$, and write $m = x_n^q \sum_{i=1}^l r_im_i$ for some $r_1, \ldots, r_l \in R$.  As $m$ is an $R_{n-1}$-linear combination of elements of $M$ killed by $\partial$, we have $\partial(m) = 0$, so $\frac{1}{q!}x_n^q\partial^q m = 0$.  Substituting $x_n^q \sum_{i=1}^l r_im_i$ for $m$ in the left-hand side of this equation and using the Leibniz rule, we see that the only terms which \textit{a priori} may lack an $x_n^{q+1}$ factor are those in the sum $\sum_{i=1}^l r_{i,0}x_n^qm_i$: we obtain, for some $\mu \in E$, an expression
\[
0 = \frac{1}{q!}x_n^q\partial^q m = \frac{1}{q!}x_n^q\partial^q(x_n^q \sum_{i=1}^l r_im_i) = \sum_{i=1}^l r_{i,0}x_n^qm_i + x_n^{q+1}\mu,
\]
so that $\sum_{i=1}^l r_{i,0}x_n^qm_i \in x_n^{q+1}E$.  It follows that $m \in x_n^{q+1}E$, as 
\[
m - \sum_{i=1}^l r_{i,0}x_n^qm_i = \sum_{i=1}^l(\sum_{s=1}^{\infty} r_{i,s}x_n^{s+q}m_i)
\] 
and we have an $x_n^{q+1}$ factor in every remaining term.  This completes the proof.
\end{proof}

\begin{prop}\cite[Thm. (iii)]{kernel}\label{holkernel}
Let $M$ be a holonomic $\D$-module.  Then $M_*$ is a holonomic $\D_{n-1}$-module.
\end{prop}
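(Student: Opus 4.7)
The plan is to reduce the statement to Proposition~\ref{bjorkxn}, which controls the cokernel of $x_n$, by constructing a holonomic $\D$-submodule $N \subseteq M$ together with a $\D_{n-1}$-linear isomorphism $M_* \cong N/x_n N$. The natural candidate, dictated by the shape of Lemma~\ref{kerlemma}, is $N := RM_*$, the $R$-submodule of $M$ generated by the elements killed by $\partial$.

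The first step is to check that $N$ is in fact a $\D$-submodule of $M$. Closure under multiplication by $R$ is automatic, so only closure under the partials needs verification. If $m \in M_*$ and $1 \leq i \leq n-1$, then $\partial(\partial_i m) = \partial_i(\partial m) = 0$, so $\partial_i m \in M_*$; combined with the Leibniz rule this forces $\partial_i(rm) \in N$ for every $r \in R$. For $\partial$ itself the computation is even easier, since $\partial m = 0$ gives $\partial(rm) = \partial(r) m \in N$. Because submodules of holonomic modules are holonomic, $N$ is a holonomic $\D$-module.

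Next I would exhibit a $\D_{n-1}$-linear isomorphism $\phi \colon M_* \to N/x_n N$ sending $m$ to its class. $\D_{n-1}$-linearity is immediate, since each $\partial_i$ with $i < n$ preserves both $M_*$ and $x_n N$. Surjectivity follows by splitting off the $x_n^0$ part: given $\sum r_i m_i \in N$ with $m_i \in M_*$ and $r_i \in R$, write $r_i = r_{i,0} + x_n s_i$ with $r_{i,0} \in R_{n-1}$ and $s_i \in R$; then $\sum r_i m_i \equiv \sum r_{i,0} m_i \pmod{x_n N}$, and $\sum r_{i,0} m_i$ lies in $M_*$ because $R_{n-1}$ commutes with $\partial$. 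The one genuinely nontrivial point, and the place where Lemma~\ref{kerlemma} is essential, is injectivity: if $m \in M_* \cap x_n N$, then $m = \sum_{i=1}^l (x_n r_i) m_i$ for some $m_i \in M_*$ and $r_i \in R$, so that $1 \cdot m - \sum_{i=1}^l (x_n r_i) m_i = 0$ is an $R$-linear relation among the elements $m, m_1, \ldots, m_l$ of $M_*$; extracting the $x_n^0$-coefficient via the key lemma gives $m = 0$, as the constant term of each $x_n r_i$ vanishes.

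With the isomorphism in hand, Proposition~\ref{bjorkxn} applied to the holonomic $\D$-module $N$ shows that $N/x_n N$, and hence $M_*$, is a holonomic $\D_{n-1}$-module. The genuine input to the argument is the key lemma; once the correct submodule $N = RM_*$ has been singled out, the rest is essentially formal.
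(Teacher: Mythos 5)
Your proposal is correct and follows essentially the same route as the paper: both arguments pass to the $\D$-submodule $R\cdot M_*$, use Lemma \ref{kerlemma} (with $j=0$) to show $M_* \cap x_n(R\cdot M_*) = 0$, identify $M_*$ with $(R\cdot M_*)/x_n(R\cdot M_*)$, and conclude via Proposition \ref{bjorkxn}. The only cosmetic difference is that the paper replaces $M$ by $R\cdot M_*$ at the outset while you keep the submodule named $N$ throughout.
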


\begin{proof}
Consider the $R$-submodule $R \cdot M_*$ of $M$ generated by $M_*$.  If $r \in R$ and $m \in M_*$, then $\partial(rm) = \partial(r)m + r\partial(m) = \partial(r)m$; since $M_*$ is already a $\D_{n-1}$-submodule of $M$, this calculation shows that $R \cdot M_*$ is a $\D$-submodule of $M$.  Furthermore, it is clear that $\ker(\partial: R \cdot M_* \rightarrow R \cdot M_*)$ coincides with $M_*$.  Therefore, we may assume that $M = R \cdot M_*$.  With this assumption, we conclude at once that $M = M_* + x_nM$ as $\D_{n-1}$-modules, since $M_*$ is an $R_{n-1}$-module.  We claim that this sum is direct.  Suppose that $m \in M_* \cap x_nM$.  Since $x_nM = x_n(R \cdot M_*)$, we can write $m = x_n(\sum r_im_i)$ for some $r_i \in R$ and $m_i \in M_*$.  From this, we obtain an equation $m - x_n(\sum r_im_i) = 0$ where $m$ and all the $m_i$ belong to $M_*$.  By Lemma \ref{kerlemma}, the constant term of the left-hand side with respect to $x_n$, which is simply $m$, also vanishes.  Thus $M_* \cap x_nM = 0$, and so $M = M_* \oplus x_nM$.  This direct sum decomposition implies that $M_* \simeq M/x_nM$, which is a holonomic $\D_{n-1}$-module by Proposition \ref{bjorkxn}.  This completes the proof.
\end{proof}

\section{Regularity}\label{regularity}

In this section, we define what it means for a $\D$-module $M$ to be \emph{$x_n$-regular}.  If $M$ is holonomic, this is a weak condition that is always satisfied up to a linear change of variables in $R$.  This is the technical assumption necessary for the cokernel of $\partial$ acting on a holonomic $\D$-module to again be holonomic.

\begin{definition}\cite[p. 21]{essthesis}\label{E}
Let $M$ be a $\D$-module, let $m \in M$, and suppose that $\tau \in \D$ is a $k$-linear derivation.  We write $E_{\tau}(m)$ for the $R$-submodule $\sum_{i=0}^{\infty} R \cdot \tau^i(m)$ of $M$ generated by the family $\{\tau^i(m)\}$.  If $N \subset M$ is an $R$-submodule, $E_{\tau}(N)$ is the $R$-submodule of $M$ generated by $E_{\tau}(n)$ for $n \in N$.
\end{definition}

For a given $\tau$ and $m$, if $E_{\tau}(m)$ is a finitely generated $R$-module, then there exists $p$ such that $\tau^p(m) \in \sum_{i=0}^{p-1} R \cdot \tau^i(m)$.  In this case, $E_{\tau}(m) = \sum_{i=0}^{p-1} R \cdot \tau^i(m)$.

\begin{lem}\cite[Ch. II, Prop. 1.16]{essthesis}\label{reglink}
Let $M$ be a $\D$-module, and suppose there exists a nonzero $f \in R$ such that the localization $M_f$ is a finitely generated $R_f$-module.  Then for any $m \in M$, there exists $s \geq 0$ such that $E_{f^s \partial}(m)$ is a finitely generated $R$-module.
\end{lem}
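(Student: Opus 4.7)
The plan is to combine Noetherianity of $R$ with the hypothesis on $M_f$ to trap the entire $(f^s\partial)$-orbit of $m$ inside a single finitely generated $R$-submodule of $M$.

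First I would promote the hypothesis on $M_f$ to a concrete relation holding inside $M$ itself. Since $R_f$ is Noetherian (as a localization of the Noetherian ring $R$), the finitely generated $R_f$-module $M_f$ is Noetherian; in particular, the ascending chain of $R_f$-submodules $\sum_{i=0}^{p} R_f \cdot (\partial^i(m)/1)$ stabilizes at some $p$. Clearing denominators in the resulting relation and then multiplying by a further power of $f$ to kill the $f$-torsion kernel of the localization map $M \to M_f$ yields an honest equation
\[
f^N \partial^p(m) = \sum_{i=0}^{p-1} h_i \partial^i(m)
\]
in $M$, for some $N \geq 0$ and $h_0, \ldots, h_{p-1} \in R$.

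Next I would pick any $s \geq N$, set $\tau = f^s\partial$, and introduce the finitely generated $R$-submodule $E_0 = \sum_{i=0}^{p-1} R \cdot \partial^i(m) \subset M$. The crux is to show $\tau^i(m) \in E_0$ for every $i \geq 0$; since $R$ is Noetherian, this immediately forces $E_\tau(m) \subset E_0$ to be a finitely generated $R$-module. I would verify the containment by induction on $i$. For $i < p$, the Leibniz rule expresses $\tau^i(m) = (f^s\partial)^i(m)$ as an $R$-linear combination of the $\partial^j(m)$ with $j \leq i < p$, which is visibly in $E_0$. For the inductive step, writing $\tau^k(m) = \sum_{j=0}^{p-1} r_j\partial^j(m)$ with $r_j \in R$, one computes
\[
\tau^{k+1}(m) = f^s\partial(\tau^k(m)) = \sum_{j=0}^{p-1}\bigl(f^s\partial(r_j)\partial^j(m) + f^s r_j \partial^{j+1}(m)\bigr).
\]
Every summand except the single potentially problematic term $f^s r_{p-1}\partial^p(m)$ already lies in $E_0$, and the inequality $s \geq N$ allows the rewrite
\[
f^s r_{p-1}\partial^p(m) = f^{s-N} r_{p-1}\sum_{j=0}^{p-1} h_j\partial^j(m) \in E_0,
\]
closing the induction.

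The main obstacle I anticipate is the first step: extracting a relation that holds on the nose in $M$ from the finite generation of $M_f$ over $R_f$. One has to juggle two separate powers of $f$, one to clear denominators in the $R_f$-relation and one to absorb the $f$-torsion kernel of $M \to M_f$. Once this relation is secured, choosing $s$ at least as large as the exponent $N$ it produces ensures that each application of $\tau$ supplies exactly the factor of $f$ needed to fold $\partial^p(m)$ back into $E_0$, and the rest is a routine bookkeeping induction.
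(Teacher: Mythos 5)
Your proposal is correct and follows essentially the same route as the paper: extract a relation $f^N\partial^p(m)\in\sum_{i<p}R\cdot\partial^i(m)$ from the finite generation of $M_f$ over the Noetherian ring $R_f$ (including the extra power of $f$ to absorb the kernel of $M\to M_f$), and then observe that $N_0=\sum_{i=0}^{p-1}R\cdot\partial^i(m)$ is stable under $f^s\partial$, so $E_{f^s\partial}(m)$ sits inside a finitely generated $R$-module. The paper merely packages your explicit induction as the one-line implication $f^s\partial(N_0)\subset N_0\Rightarrow E_{f^s\partial}(N_0)\subset N_0$.
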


\begin{proof}
Let $m \in M$ be given.  Since $R$ (and hence $R_f$) is Noetherian, any $R$-submodule of $M_f$ is also finitely generated over $R$.  In particular, this is true of the $R$-submodule $E_{\partial}(R_f \cdot m) \subset M_f$, where here we are regarding $M_f$ as a $\D$-module in the obvious way (and replacing $R$ with $R_f$ in our definition of the $E_{\partial}$ construction).  Therefore, for some $p \geq 1$, $E_{\partial}(R_f \cdot m) = R_f \cdot m + R_f \cdot \partial(m) + \cdots + R_f \cdot \partial^{p-1}(m)$, and consequently $\partial^p(m)$ can be written $\rho_0 m + \rho_1 \partial(m) + \cdots + \rho_{p-1} \partial^{p-1}(m)$ for some $\rho_0, \ldots, \rho_{p-1} \in R_f$.  Clearing denominators (and multiplying by a further power of $f$ if necessary), we see that there exists $s \geq 0$ such that $f^s \partial^p(m) = r_0 m + r_1 \partial(m) + \cdots r_{p-1} \partial^{p-1}(m)$ for some $r_0, \ldots, r_{p-1} \in R$.  Write $N$ for the $R$-submodule $\sum_{i=0}^{p-1} R \cdot \partial^i(m)$ of $M$.  Then the fact that $f^s \partial^p(m) \in N$ implies that $f^s \partial(N) \subset N$, from which it follows at once that $E_{f^s \partial}(N) \subset N$.  By definition, $N$ is a finitely generated $R$-module, so $E_{f^s \partial}(N)$ and its $R$-submodule $E_{f^s \partial}(m)$ are finitely generated $R$-modules, completing the proof.
\end{proof}

Lemma \ref{reglink} is useful because its hypothesis is satisfied for \textit{every} holonomic $\D$-module.

\begin{prop}\cite[Prop. 1]{example}\label{fgloc}
Let $M$ be a holonomic $\D$-module.  There exists a nonzero $f \in R$ such that $M_f$ is a finitely generated $R_f$-module.
\end{prop}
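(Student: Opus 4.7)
The plan is to exploit the characteristic variety of $M$. I would fix a good filtration $\{M_j\}$ on $M$, so that $\gr M$ is a finitely generated graded $R[\zeta_1, \ldots, \zeta_n]$-module with characteristic ideal $J$ satisfying $\dim(\gr\D)/J = n$. Let $\mathfrak{p}_1, \ldots, \mathfrak{p}_t$ be the minimal primes of $J$. By Remark \ref{zetahom}, each $\mathfrak{p}_i$ is homogeneous in the $\zeta_j$, and $\dim(\gr\D)/\mathfrak{p}_i \leq n$ for every $i$. The aim is to produce a nonzero $f \in R$ such that every $\zeta_j$ becomes nilpotent on $\gr M \otimes_R R_f$; the finite generation of $M_f$ over $R_f$ will then follow from the good filtration.

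The heart of the argument is the following dichotomy for each minimal prime $\mathfrak{p}_i$: I claim there is a nonzero $f_i \in R$ with $\zeta_j \in \mathfrak{p}_i R_{f_i}[\zeta]$ for every $j$. If $\mathfrak{p}_i \cap R \neq 0$, any nonzero element of that intersection works, since it makes $\mathfrak{p}_i R_{f_i}[\zeta]$ the unit ideal. Otherwise $\mathfrak{p}_i \cap R = 0$, and the inclusion $R \hookrightarrow R[\zeta]/\mathfrak{p}_i$ forces $\dim R[\zeta]/\mathfrak{p}_i \geq n$, hence equality by the bound above. Writing $K = \mathrm{Frac}(R)$, this dimension count shows that $K[\zeta]/\mathfrak{p}_i K[\zeta]$ is a $0$-dimensional domain, hence a field. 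But this field inherits a non-negative $\mathbb{Z}$-grading with $K$ concentrated in degree zero, and any nonzero homogeneous element of positive degree would be a unit whose inverse has negative degree---impossible. Therefore the quotient equals $K$ itself, giving $\zeta_j \in \mathfrak{p}_i K[\zeta]$ for each $j$; clearing denominators yields the required $f_i$.

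Once the $f_i$ are in hand, I would set $f = \prod_i f_i \in R \setminus \{0\}$. Then
\[
(\zeta_1, \ldots, \zeta_n) \subseteq \bigcap_i \mathfrak{p}_i R_f[\zeta] = J R_f[\zeta] = \sqrt{\Ann_{R_f[\zeta]}(\gr M \otimes_R R_f)},
\]
so some common power $\zeta_j^N$ annihilates $\gr M \otimes_R R_f$ for every $j$. Since $\gr M$ is generated over $R[\zeta]$ by homogeneous elements of bounded degree, the nilpotency of the $\zeta_j$ forces $\gr M \otimes_R R_f$ to vanish in sufficiently high degree. Each graded piece $\gr_j M$ is already finitely generated over $R$, so the resulting truncation of $\gr M \otimes_R R_f$ is finitely generated over $R_f$. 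The filtration $\{M_j \otimes_R R_f\}$ on $M_f$ then stabilizes at some finite stage, and $M_f$ itself is finitely generated over $R_f$.

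The main obstacle is the case $\mathfrak{p}_i \cap R = 0$, where the graded-field argument is essential and depends on both the dimension bound $\dim(\gr\D)/\mathfrak{p}_i \leq n$ coming from holonomy and the $\zeta$-homogeneity of $\mathfrak{p}_i$. I note that Gabber's involutivity theorem is not needed for this proposition---the holonomic dimension count alone does the real work, with involutivity reserved for later arguments about regularity and cokernels.
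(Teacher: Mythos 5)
Your proof is correct in substance, but it takes a genuinely different route from the paper's. The paper reduces Proposition \ref{fgloc} to simple modules by induction on length (the conclusion is multiplicative in short exact sequences), then handles the two resulting cases by citing Bj\"ork's Lemma 3.3.19 when $M$ is $R$-torsionfree and by observing that a simple module with $R$-torsion is annihilated outright by inverting a single element. You instead work directly with the characteristic variety, in effect giving a self-contained, uniform proof of the input the paper outsources to Bj\"ork: holonomy pins down $\dim (\gr\D)/\mathfrak{p} = n$ for each minimal prime $\mathfrak{p}$ of $J$ with $\mathfrak{p} \cap R = 0$, homogeneity then forces $\zeta_1, \ldots, \zeta_n \in \mathfrak{p}$, the components with $\mathfrak{p} \cap R \neq 0$ are killed by localization, and nilpotency of the $\zeta_j$ stabilizes the good filtration on $M_f$. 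That last stabilization step and the reduction to minimal primes are all fine. Two justifications need tightening, though neither is fatal. First, the bare inclusion $R \hookrightarrow (\gr\D)/\mathfrak{p}$ does \emph{not} by itself force $\dim (\gr\D)/\mathfrak{p} \geq n$ (compare $\mathbb{Z}_p \hookrightarrow \mathbb{Q}_p$); what you actually need is that homogeneity together with $\mathfrak{p} \cap R = 0$ gives $\mathfrak{p} \subseteq (\zeta_1, \ldots, \zeta_n)$, so that $(\gr\D)/\mathfrak{p}$ surjects onto $(\gr\D)/(\zeta_1, \ldots, \zeta_n) \simeq R$. Second, deducing that $K[\zeta]/\mathfrak{p}K[\zeta]$ is zero-dimensional from $\dim (\gr\D)/\mathfrak{p} = n$ uses the dimension formula over the universally catenary ring $R$; this is legitimate for a complete local base but should be said. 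In fact both points, and the entire passage through $K = \mathrm{Frac}(R)$ and the graded-field argument, can be bypassed: if some $\zeta_j \notin \mathfrak{p}$, then $\mathfrak{p} \subsetneq (\zeta_1, \ldots, \zeta_n)$ is a strict inclusion of primes of $\gr\D$, and appending the chain of primes $(\zeta_1, \ldots, \zeta_n) + \mathfrak{q}\gr\D$ for a saturated chain $\mathfrak{q}$ in $R$ gives $\dim (\gr\D)/\mathfrak{p} \geq n+1$, a contradiction; hence $\zeta_j \in \mathfrak{p}$ directly. Your closing remark that Gabber's theorem is not needed here is correct and consistent with the paper, which also avoids it in this proposition.
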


\begin{proof}
We first consider two special cases.  If $M$ is $R$-torsionfree, this is \cite[Lemma 3.3.19]{bjork}.  If $M$ is simple as a $\D$-module and is not $R$-torsionfree, there exist nonzero $f \in R$ and $m \in M$ such that $fm = 0$.  By the simplicity of $M$, $M = \D \cdot m$, from which it follows that $M_f$ is zero and \textit{a fortiori} finitely generated.  To prove the proposition for all holonomic $\D$-modules $M$, we use the fact that any such $M$ is of finite length as a $\D$-module.  If  
\[
0 \rightarrow M' \rightarrow M \rightarrow M'' \rightarrow 0
\]
is a short exact sequence of $\D$-modules and there exist nonzero $g,h \in R$ such that $M'_f$ (resp. $M''_g$) is a finitely generated $R_f$- (resp. $R_g$-) module, then $M_{fg}$ is a finitely generated $R_{fg}$-module.  Therefore induction on length, which reduces us to the case of a simple $\D$-module and hence to the previous two special cases, completes the proof.
\end{proof}

Recall from section \ref{prelim} that $f \in R$ is said to be \emph{$x_n$-regular} if $f(0,0,\ldots, 0, x_n) \neq 0$, in which case $f$ can be written as the product of a unit and a ``Weierstrass polynomial'' in $x_n$.

\begin{definition}\cite[p. 903]{cokernel}\label{Mreg}
Let $M$ be a $\D$-module.  We say that $M$ is \emph{$x_n$-regular} if for every $m \in M$, there exists an $x_n$-regular $f \in R$ such that $E_{f \partial}(m)$ is a finitely generated $R$-module.  (Any $m \in M$ for which this holds is said to be an \emph{$x_n$-regular element}.) 
\end{definition}

\begin{remark}\label{holMreg}
In what follows, we will frequently consider $\D$-modules $M$ which are holonomic, hence cyclic, together with a choice of $m$ such that $M = \D \cdot m$.  It is not hard to check that if $E_{\tau}(m)$ is a finitely generated $R$-module for some derivation $\tau \in \D$, then $E_{\tau}(\delta(m))$ is also finitely generated for every $\delta \in \D$.  It follows that if $M = \D \cdot m$ is cyclic, then $M$ is $x_n$-regular as long as $E_{\tau}(m)$ is finitely generated over $R$, and this does not depend on the choice of $\D$-module generator $m$ for $M$.
\end{remark}

Having now stated all necessary definitions, we begin working toward the crucial technical result (Lemma \ref{coklemma}) on the cokernel of $\partial$ acting on a holonomic, $x_n$-regular $\D$-module.  The following proposition can be viewed as a generalization of the Weierstrass preparation theorem (which we recover by taking $l=0$):

\begin{prop}\cite[Thm. 1]{several2}\label{weiergen}
Let $\Delta: R \rightarrow R$ be a differential operator of the form $\Delta = \sum_{i=0}^{l} r_i \partial^i$ where $r_i \in R$ for all $i$ and $r_l$ is $x_n$-regular.  Then $R/\Delta(R)$ is a finitely generated $R_{n-1}$-module.
\end{prop}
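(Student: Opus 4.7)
The plan is to induct on $l$. For $l = 0$, $\Delta$ is multiplication by the $x_n$-regular element $r_0$, so $R/\Delta(R) = R/r_0 R$ is finitely generated over $R_{n-1}$ directly from Weierstrass preparation (Remark \ref{wpfingen}).

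For the inductive step with $l \geq 1$, I would first apply Weierstrass preparation to $r_l$ to write $r_l = uP$ with $u$ a unit in $R$ and $P$ a monic polynomial of degree $d$ in $x_n$ with coefficients in $\mathfrak{m}_{n-1}$. Since multiplication by $u$ is an $R_{n-1}$-linear automorphism of $R$, we have $R/\Delta(R) \cong R/(u^{-1}\Delta)(R)$, reducing to the case $r_l = P$. Next, I would exploit the $R_{n-1}$-module decomposition $R = V_l \oplus x_n^l R$, where $V_l := \bigoplus_{i=0}^{l-1} R_{n-1} x_n^i$, together with the $R_{n-1}$-linear isomorphism $\partial^l : x_n^l R \xrightarrow{\sim} R$ whose inverse $\Phi$ is an $l$-fold formal antiderivative. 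Writing any $g \in R$ as $v + \Phi(h)$ with $v \in V_l$ and $h \in R$, one computes $\Delta(g) = \Delta(v) + Ph + E(h)$, where $E(h) := \sum_{i<l} r_i \partial^i \Phi(h)$ satisfies $E(R) \subseteq x_n R$ because $\partial^i \Phi(h) \in x_n^{l-i} R$ for $i < l$. Since $\Delta(V_l)$ is a finitely generated $R_{n-1}$-module (it is the $R_{n-1}$-linear image of the free rank-$l$ module $V_l$), it suffices to show that $R / \{Ph + E(h) : h \in R\}$ is finitely generated over $R_{n-1}$.

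For the latter, I would set up an iterative reduction. Given $h \in R$, Weierstrass division (applied to $P$) produces $h = Pq_0 + v_0$ with $v_0 \in V_d := \bigoplus_{i<d} R_{n-1} x_n^i$. The identity $Pq_0 = \Delta(\Phi(q_0)) - E(q_0)$ then gives $h \equiv v_0 - E(q_0) \pmod{\Delta(R)}$, with $-E(q_0) \in x_n R$. Iterating on $-E(q_0)$ produces sequences $q_k \in R$ and $v_k \in V_d$ with the property that the residual term $(-E)^k(q_0) \in x_n^k R$ tends to zero in the $x_n$-adic (hence $\mathfrak{m}$-adic) topology on $R$. The conclusion would be that $h$ is congruent modulo $\Delta(R)$ to $\sum v_k \in V_d$, so $R/\Delta(R)$ is generated over $R_{n-1}$ by the classes of $V_d \cup \Delta(V_l)$ -- a finitely generated submodule.

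The main obstacle will be establishing the convergence of $\sum v_k$ in the $\mathfrak{m}_{n-1}$-adic topology on $V_d$, needed because $V_d$ is a finitely generated (hence $\mathfrak{m}_{n-1}$-adically complete) $R_{n-1}$-module rather than a discrete $k$-vector space. This requires careful simultaneous tracking of how Weierstrass division by $P$ and the operator $E$ interact with the $x_n$- and $\mathfrak{m}_{n-1}$-adic filtrations, which do not, a priori, behave transparently together. If this direct iteration resists clean analysis, a fallback approach, still inductive on $l$, would be to consider the lower-order operator $\Delta_1 := \operatorname{ad}(x_n)(\Delta) = \sum_{j=0}^{l-1}(j+1)r_{j+1}\partial^j$, whose leading coefficient $l r_l$ is still $x_n$-regular, apply the inductive hypothesis to $\Delta_1$, and exploit the operator identity $\Delta(x_n g) = x_n \Delta(g) + \Delta_1(g)$ together with its iterates $\operatorname{ad}(x_n)^j(\Delta)$ (which descend in order and terminate in $l!\,r_l$, already handled by Weierstrass) to bridge between the f.g. cokernel of $\Delta_1$ and the desired cokernel of $\Delta$.
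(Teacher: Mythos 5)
Your base case and the algebraic setup of the inductive step (the reduction to $r_l = P$, the splitting $R = V_l \oplus x_n^l R$, and the identity $\Delta(\Phi(h)) = Ph + E(h)$ with $E(R) \subseteq x_n R$) are all correct, but the iteration at the heart of the argument does not converge, and the obstacle you flag is fatal rather than technical. First, the assertion that the residual after $k$ steps lies in $x_n^k R$ is unjustified: Weierstrass division of an element of $x_n R$ by $P$ need not return a quotient in $x_n R$ (divide $x_n$ by $P = x_n - a$ with $a \in \mathfrak{m}_{n-1}$: the quotient is $1$), so the single power of $x_n$ gained from $E$ is immediately lost. Second, and decisively, the target you are iterating toward is false. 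Take $n=1$ and $\Delta = x\partial - 5$, so $l = 1$, $P = x$, $d = 1$, $\Phi = \int$, and $E(h) = -5\int h$. Since $\Delta(x^t) = (t-5)x^t$, the cokernel $R/\Delta(R)$ is one-dimensional, spanned by the class of $x^5$; but your proposed generators, the classes of $V_d = k$ and of $\Delta(V_l) = k \cdot (-5)$, are all zero in the cokernel. Running your iteration on $h_0 = x^5$ gives $q_0 = x^4$, $v_0 = 0$, and $h_1 = -E(q_0) = 5\int x^4 = x^5 = h_0$: the process cycles forever. (For $n=1$ the relevant topology on $V_d$ is discrete, so convergence of $\sum v_k$ would force $v_k = 0$ eventually, which cannot produce a nonzero class in any case.) The fallback does not close this gap either: from $\Delta_1(g) = \Delta(x_n g) - x_n\Delta(g)$ and $R = F + \Delta_1(R)$ with $F$ finitely generated over $R_{n-1}$, you only obtain $R \subseteq F + \Delta(R) + x_n R$, and iterating accumulates correction terms $F + x_n F + x_n^2 F + \cdots$, which fill up $F[[x_n]]$ rather than a finitely generated $R_{n-1}$-module, precisely because $x_n \notin \mathfrak{m}_{n-1}$.

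The paper sidesteps both problems by inducting on $n$ rather than on $l$. The base case $n=1$ is Malgrange's result (Lemma \ref{malglem}): setting $s = \max_i(i - \nu(r_i))$, one shows $\Delta(x^t) = c(t)x^{t-s} + (\text{higher order})$ where $c(t)$ is a nonzero polynomial in $t$, so that $\Delta$ restricts to an isomorphism $\mathfrak{m}^t \to \mathfrak{m}^{t-s}$ for all $t \geq t_0$; the finitely many ``resonant'' degrees (such as $t = 5$ above) are exactly what $V_d \cup \Delta(V_l)$ misses, and they are absorbed into the finite-dimensional quotient $R/\mathfrak{m}^{t_0}$. For $n > 1$ one sets $x_{n-1} = 0$, applies the inductive hypothesis to conclude that $R/(\Delta(R) + x_{n-1}R)$ is finitely generated over $R_{n-2}$, and then runs a successive-approximation argument in the variable $x_{n-1}$; that iteration does converge, because $x_{n-1}$ lies in $\mathfrak{m}_{n-1}$ and so the accumulating coefficients form convergent series inside $R_{n-1}$. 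Any repair of your route would need, at a minimum, a larger generating set accounting for resonances and estimates in the $\mathfrak{m}_{n-1}$-adic rather than the $x_n$-adic topology.
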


We state and prove a special case of Proposition \ref{weiergen}, due to Malgrange, separately:

\begin{lem}\cite[Prop. 1.3]{malgrange}\label{malglem}
Let $R = k[[x]]$ and let $\Delta: R \rightarrow R$ be a nonzero differential operator.  Then $R/\Delta(R)$ is a finite-dimensional $k$-space.
\end{lem}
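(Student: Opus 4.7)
The plan is to show that $\Delta(R) \supseteq x^N R$ for some $N \geq 0$; this suffices since $R/x^N R$ has $k$-dimension $N$ and $R/\Delta(R)$ is then a quotient. Write $\Delta = \sum_{i=0}^l r_i \partial^i$ with $r_i \in R = k[[x]]$ and $r_l \neq 0$, and expand each $r_i = \sum_{j \geq 0} r_{i,j}\, x^j$ with $r_{i,j} \in k$; for $r_i \neq 0$, let $v(r_i)$ be its $x$-adic valuation (the least $j$ with $r_{i,j} \neq 0$), and write $\alpha_i(n) := n(n-1)\cdots(n-i+1)$ for the $i$-th falling factorial.

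A direct computation using $\partial^i(x^n) = \alpha_i(n)\, x^{n-i}$ shows that the coefficient of $x^{n+s}$ in $\Delta(x^n)$ is the polynomial in $n$
\[
P_s(n) \;=\; \sum_{i=0}^l r_{i,\,i+s}\, \alpha_i(n),
\]
where $r_{i,\,i+s} := 0$ if $i+s < 0$. Set $\nu := \min\{v(r_i) - i : r_i \neq 0\}$, which is well-defined since $r_l \neq 0$. Then $P_s \equiv 0$ for $s < \nu$, while $P_\nu$ involves only the indices $i$ achieving this minimum, weighted by the nonzero leading coefficients $r_{i,\,v(r_i)}$. Since $\alpha_0, \alpha_1, \ldots, \alpha_l$ are polynomials of distinct degrees, they are $k$-linearly independent, so $P_\nu$ is a \emph{nonzero} polynomial in $n$. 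Because $k$ has characteristic zero, $P_\nu$ has only finitely many integer roots; choose $n_0$ larger than all of them. Then for every $n \geq n_0$, $\Delta(x^n)$ has $x$-adic valuation exactly $n + \nu$, with leading coefficient $P_\nu(n) \neq 0$.

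Given any $g \in R$ with $v(g) \geq n_0 + \nu$, the plan is to solve $\Delta(f) = g$ by successive approximation in the $x$-adic topology: choose scalars $f_{n_0}, f_{n_0+1}, \ldots \in k$ inductively so that the partial sum $f^{(k)} := \sum_{j=0}^{k} f_{n_0+j}\, x^{n_0+j}$ satisfies $v\bigl(g - \Delta(f^{(k)})\bigr) \geq n_0 + \nu + k + 1$. At the $k$-th step this amounts to dividing the leading coefficient of the remainder by $P_\nu(n_0+k)$, which is nonzero by construction. The series $f := \sum_{j \geq 0} f_{n_0+j}\, x^{n_0+j}$ converges in $R$, and since $\Delta$ sends $(x)^m$ into $(x)^{m-l}$ and is thus $x$-adically continuous, $\Delta(f) = g$. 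Setting $N := \max(n_0 + \nu,\, 0)$ then gives $\Delta(R) \supseteq x^N R$, as required.

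The main technical hurdle is isolating the correct ``leading-term'' polynomial $P_\nu$ and verifying that it is genuinely nonzero: this rests on the linear independence of the falling factorials together with the hypothesis $r_l \neq 0$ (which ensures the minimum defining $\nu$ is attained), and it is the place where characteristic zero enters essentially, via the passage from ``nonzero as a polynomial in $n$'' to ``nonzero at all sufficiently large integers $n$.'' Once this indicial analysis is in hand, the Hensel-type bootstrapping that constructs $f$ from $g$ is purely routine.
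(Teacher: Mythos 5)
Your proposal is correct and follows essentially the same route as the paper: both isolate the indicial polynomial governing the lowest-order coefficient of $\Delta(x^n)$ (your $P_\nu$, the paper's $\sum_{i \in I} t(t-1)\cdots(t-i+1)\rho_i(0)$, nonzero for the same reason), observe that in characteristic zero it is nonvanishing for all large $n$, and then solve $\Delta(f)=g$ by coefficient recursion. The only cosmetic difference is that you conclude via the containment $\Delta(R) \supseteq x^N R$ where the paper packages the same recursion as an isomorphism $\mathfrak{m}^t \xrightarrow{\sim} \mathfrak{m}^{t-s}$ followed by the snake lemma.
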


\begin{proof}
For any formal power series $r = \sum \alpha_i x^i \in R$, we let $\nu(r) = \min{\{i | \alpha_i \neq 0\}}$.  The differential operator $\Delta$ takes the form $\Delta = \sum_{i=0}^{l} r_i \partial^i$ where $r_l \neq 0$.  Set $s = \max{\{i - \nu(r_i)\}}$, and let $I \subset \{0, \ldots, l\}$ be the set of indices for which this maximum is attained, that is, for which $s = i - \nu(r_i)$.  For each $i \in I$, $r_i = x^{i-s}\rho_i$ for some $\rho_i \in R$ such that $\rho_i(0) \neq 0$.  For any integer $t \geq s$, we have
\[
\Delta(x^t) = \sum_{i \in I} t(t-1)\cdots (t-i+1) \rho_i(0) x^{t-s} + \textrm{higher order terms}.
\]
The coefficient of $x^{t-s}$ in the above expression, namely $\sum_{i \in I} t(t-1)\cdots (t-i+1) \rho_i(0)$, is a polynomial in $t$ whose leading term is $\rho_{\max I}(0) t^{\max I}$.  Since $\max I \in I$, we have $\rho_{\max I}(0) \neq 0$, and so for large enough $t$, this leading term dominates the polynomial: there exists $t_0$ such that for all $t \geq t_0$,
\[
\Delta(x^t) = (\textrm{nonzero})x^{t-s} + \textrm{higher order terms}.
\]
Since $k$ is of characteristic zero, it follows that if $t \geq t_0$, given any $g \in R$ such that $\nu(g) \geq t-s$ (that is, $g \in \mathfrak{m}^{t-s}$, where $\mathfrak{m} \subset R$ is the maximal ideal), we can solve the equation $\Delta(f) = g$ uniquely for $f \in R$ by recursion on the coefficients, and the unique solution $f$ belongs to $\mathfrak{m}^t$.  Therefore the restriction of $\Delta$ is an isomorphism of $k$-spaces $\mathfrak{m}^t \xrightarrow{\sim} \mathfrak{m}^{t-s}$ for any $t \geq t_0$.  Fix such a $t$, and consider the commutative diagram
\[
\begin{CD}
0 @>>> \mathfrak{m}^t @>>> R @>>> R/\mathfrak{m}^t @>>> 0\\
@.            @VV \Delta V     @VV \Delta V   @VV \overline{\Delta} V    @.\\
0 @>>> \mathfrak{m}^{t-s} @>>> R @>>> R/\mathfrak{m}^{t-s} @>>> 0\\
\end{CD}
\]
of $k$-spaces with exact rows, where $\overline{\Delta}$ is the map induced on quotients by $\Delta$.  We know that the left vertical arrow is an isomorphism, and the source and target of the right vertical arrow are finite-dimensional $k$-spaces.  It follows at once from the snake lemma that the middle vertical arrow has finite-dimensional cokernel, as desired.
\end{proof}

\begin{proof}[Proof of Proposition \ref{weiergen}]
We proceed by induction on $n$.  In the base case $n = 1$, the hypothesis that $r_l$ be $x_n$-regular reduces to the hypothesis that $r_l \neq 0$, and so we are done by Lemma \ref{malglem}.  Now assume that the proposition holds over $R_{n-1}$.  Let $R'$ be the formal power series ring $k[[x_1, \ldots, x_{n-2}, x_n]]$, and define a differential operator $\Delta': R' \rightarrow R'$ by
\[
\Delta' = \sum_{i=0}^l r_i(x_1, \ldots, x_{n-2}, 0, x_n) \partial^i.
\]
Since $r_l$ is $x_n$-regular by hypothesis, so is $r_l(x_1, \ldots, x_{n-2}, 0, x_n)$.  Therefore, by the inductive hypothesis, $R'/\Delta'(R')$ is a finitely generated $R_{n-2}$-module.  As we have isomorphisms
\[
\frac{R'}{\Delta'(R')} \simeq \frac{R}{x_{n-1}R + \Delta'(R)} \simeq \frac{R}{\Delta(R) + x_{n-1}R}
\]
of $R'$-modules (and hence of $R_{n-2}$-modules) it follows that $R/(\Delta(R) + x_{n-1}R)$ is a finitely generated $R_{n-2}$-module.  Let $f_1, \ldots, f_m \in R$ be such that $R \subset \sum_{j=1}^m R_{n-2} \cdot f_j + x_{n-1}R + \Delta(R)$.  If $r \in R$ is given, then there exist $\alpha_1, \ldots, \alpha_m \in R_{n-2}$ and $g,h \in R$ such that
\[
r = \sum_{j=1}^m \alpha_j f_j + x_{n-1}g + \Delta(h).
\]
Likewise, since $g \in R$, there exist $\beta_1, \ldots, \beta_m \in R_{n-2}$ and $g',h' \in R$ such that $g = \sum_{j=1}^m \beta_j f_j + x_{n-1}g' + \Delta(h')$.  Substituting this expression for $g$ in the previous equation gives
\[
r = \sum_{j=1}^m (\alpha_j + \beta_j x_{n-1})f_j + x_{n-1}^2 g' + \Delta(h + x_{n-1}h'),
\]
and we can find a similar expression to substitute for $g'$ and continue.  Since $R$ and $R_{n-1}$ are Noetherian complete local rings and $x_{n-1}$ belongs to their maximal ideals, this process converges to an expression
\[
r = \sum_{j=1}^m \rho_j f_j + 0 + \Delta(\eta)
\]
where $\eta \in R$ and all $\rho_j \in R_{n-1}$; that is, we have $R \subset \sum_{j=1}^m R_{n-1} \cdot f_j + \Delta(R)$, so $R/\Delta(R)$ is a finitely generated $R_{n-1}$-module, completing the proof.
\end{proof}

The following is the key technical result in our study of the cokernel of $\partial$ acting on a holonomic, $x_n$-regular $\D$-module:

\begin{lem}[Key lemma for cokernels]\cite[Cor. 2]{cokernel}\label{coklemma}
Let $M$ be a $\D$-module, and let $m \in M$ be an $x_n$-regular element.  There exists $p \geq 1$ and a finitely generated $R_{n-1}$-submodule $E_0$ of $R \cdot m$ such that $R \cdot m \subset E_0 + \sum_{i=1}^p \partial^i(R \cdot m)$.  In particular, $R \cdot m \subset E_0 + \partial(M)$.
\end{lem}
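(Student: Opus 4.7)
The plan is to produce an auxiliary operator $\Delta^* \in \D$ with $x_n$-regular leading coefficient such that $\Delta^*(h) \cdot m \in K := \sum_{i=1}^p \partial^i(R \cdot m)$ for every $h \in R$, and then apply Proposition \ref{weiergen} to $\Delta^*$. To extract $\Delta$, use the hypothesis that $m$ is $x_n$-regular: some $x_n$-regular $f \in R$ makes $E_{f\partial}(m)$ finitely generated over $R$, so for some $p \ge 1$ and $r_0,\ldots,r_{p-1} \in R$ one has $(f\partial)^p m = \sum_{i=0}^{p-1} r_i (f\partial)^i m$. The operator $\Delta := (f\partial)^p - \sum_i r_i (f\partial)^i \in \D$ kills $m$; writing it in the standard form $\Delta = \sum_{j=0}^p b_j \partial^j$ by moving all $f$'s to the left via $\partial f = f\partial + \partial(f)$, its leading coefficient is $b_p = f^p$, which is still $x_n$-regular.

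The key computational ingredient is the reduction formula: for every $s \in R$ and every $1 \le k \le p$,
\[
s \partial^k m \equiv (-1)^k \partial^k(s) \cdot m \pmod{K}.
\]
This follows by induction on $k$ from the Leibniz expansion $\partial^k(sm) = \sum_{l=0}^k \binom{k}{l} \partial^{k-l}(s) \partial^l m$ (whose left-hand side lies in $K$, as $k \ge 1$) together with the binomial identity $\sum_{l=0}^k \binom{k}{l}(-1)^l = 0$.

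For $h \in R$, the relation $\Delta m = 0$ gives $\Delta(hm) = [\Delta, h] m$; since $[\Delta, h]$ has order $\le p-1$, the right-hand side is an $R$-linear combination of $m, \partial m, \ldots, \partial^{p-1} m$, while the left-hand side $\sum_j b_j \partial^j(hm)$ expands via Leibniz into an $R$-linear combination of $m, \partial m, \ldots, \partial^p m$. Reducing both expansions modulo $K$ using the formula above and equating yields, after bookkeeping, the adjoint identity
\[
\Delta^*(h) \cdot m \in K, \qquad \text{where} \quad \Delta^*(h) := \sum_{k=0}^p (-1)^k \partial^k(b_k h)
\]
is the formal adjoint of $\Delta$ applied to $h \in R$.

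Finally, the coefficient of $\partial^p$ in $\Delta^*$ (expanded in standard form using $\partial^k b_k = \sum_l \binom{k}{l} \partial^{k-l}(b_k) \partial^l$) is $(-1)^p b_p = \pm f^p$, still $x_n$-regular, so Proposition \ref{weiergen} gives that $R/\Delta^*(R)$ is a finitely generated $R_{n-1}$-module. Pick $r_1, \ldots, r_l \in R$ with $R = \sum_j R_{n-1} r_j + \Delta^*(R)$ and take $E_0 := \sum_j R_{n-1} \cdot (r_j m) \subset R \cdot m$; for any $r \in R$, writing $r = \sum_j \alpha_j r_j + \Delta^*(h)$ with $\alpha_j \in R_{n-1}$ gives $rm \in E_0 + K$, as required. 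The main obstacle I expect is the bookkeeping in the third paragraph: both sides of $\Delta(hm) = [\Delta, h] m$ involve nested Leibniz expansions whose $Rm$-components must, after reduction modulo $K$, be shown to cancel to exactly $\Delta^*(h) m$.
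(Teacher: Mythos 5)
Your proof is correct, and it shares its skeleton with the paper's: both extract, from the finite generation of $E_{f\partial}(m)$, an operator annihilating $m$ whose leading coefficient is a power of the $x_n$-regular element $f$, pass to a formal adjoint, and invoke Proposition \ref{weiergen}. The difference is where the adjoint is taken. The paper transposes with respect to $\tau = f\partial$ inside the ring $R\langle\tau\rangle$, which yields finite generation of $E/f\partial(E)$ over $R_{n-1}$ rather than of $E/\partial(E)$; converting the former into the latter costs the paper an extra argument involving the $f$-torsion submodule $(0:_{E+\partial(E)}f)$ and a second appeal to Weierstrass preparation, followed by a final adjustment to force $E_0$ into $R\cdot m$. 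By taking the adjoint with respect to $\partial$ itself and reducing modulo $K=\sum_{i=1}^p \partial^i(R\cdot m)$ from the outset, you bypass all of this, and your $E_0=\sum_j R_{n-1}\cdot(r_j m)$ lands inside $R\cdot m$ automatically. One remark on the step you flag as the main obstacle: there is no bookkeeping to worry about, and the detour through $\Delta(hm)=[\Delta,h]m$ is unnecessary. Just apply your reduction formula termwise with $s=b_kh$:
\[
\Delta^*(h)\, m \;=\; \sum_{k=0}^p (-1)^k \partial^k(b_k h)\, m \;\equiv\; \sum_{k=0}^p (b_k h)\,\partial^k(m) \;=\; h\,\Delta(m) \;=\; 0 \pmod{K},
\]
where the $k=0$ term is an exact equality and, for $1\le k\le p$, the congruence is your formula read right to left. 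This one line is the adjoint identity you wanted.
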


We remark that $E_0$ can be taken to be the $R_{n-1}$-submodule generated by $m, x_n m, \ldots, x_n^N m$ for some $N$ \cite[Thm. 3.2]{several}; however, we will not need this more precise statement, and its proof is more complicated than the proof below.

\begin{proof}
By the definition of $x_n$-regularity of $m$, there exists an $x_n$-regular $f \in R$ such that $E_{f\partial}(m)$ is a finitely generated $R$-module.  Write $E$ for $E_{f\partial}(m)$ and $\tau$ for the derivation $f\partial: R \rightarrow R$.  The finite generation of $E_{\tau}(m)$ over $R$ implies that for some $p$, $\tau^p(m) \in \sum_{i=0}^{p-1} R \cdot \tau^i(m)$ (so that $E$ can be identified with the $R$-module on the right-hand side).  We claim that $E/\tau(E)$ is a finitely generated $R_{n-1}$-module.  Let $r_0, \ldots, r_{p-1} \in R$ be such that $\tau^p(m) = r_0m + r_1\tau(m) + \cdots + r_{p-1}\tau^{p-1}(m)$.  Define $\delta: R \rightarrow R$ to be the differential operator $\tau^p - \sum_{i=0}^{p-1} r_i\tau^i$.  Then $\delta(m) = 0$, and therefore $E$ is a quotient of the $R$-module $N = R\langle \tau \rangle/(R\langle\tau\rangle \cdot \delta)$.  It will suffice to show that $N/\tau(N)$ is a finitely generated $R_{n-1}$-module.  As we have isomorphisms
\[
\frac{N}{\tau(N)} \simeq \frac{R\langle\tau\rangle/(R\langle\tau\rangle \cdot \delta)}{\tau(R\langle\tau\rangle/(R\langle\tau\rangle \cdot \delta))} \simeq \frac{R\langle\tau\rangle}{R\langle\tau\rangle \cdot \delta + \tau R\langle\tau\rangle} \simeq \frac{R}{R \cap (R \cdot \delta + \tau R\langle\tau\rangle)}
\]
of $R_{n-1}$-modules, it suffices to show $R/(R \cap (R \cdot \delta + \tau R\langle\tau\rangle))$ is a finitely generated $R_{n-1}$-module.  We claim that this module can be identified with $R/\Delta(R)$ for some choice of differential operator $\Delta$ satisfying the hypothesis of Proposition \ref{weiergen}.

To this end, we introduce a ``transposition'' operation on the ring $R\langle\tau\rangle$.  Let $\phi: R\langle\tau\rangle \rightarrow R\langle\tau\rangle$ be the unique additive map such that $\phi(\tau) = -\tau$, $\phi(g) = g$ for all $g \in R$, and $\phi(ST) = \phi(T)\phi(S)$ for all $S,T \in R\langle\tau\rangle$.  We will use the notation $S^*$ for $\phi(S)$.  For all $g \in R$ and $S \in R\langle\tau\rangle$, we have $gS \equiv S^*g$ mod $\tau R\langle\tau\rangle$ (this follows by induction on the $\tau$-degree of $S$: if $\deg_{\tau}(S) = 0$, then $S^* = S \in R$ and the statement is immediate).  Therefore
\[
R \cap (R \cdot \delta + \tau R\langle\tau\rangle) = \delta^*R.
\]  
Since $(\tau^p)^* = (-1)^p\tau^p$, the leading term of $\delta^*$ with respect to $\partial$ is $(-f)^p\partial^p$.  By hypothesis, $f$ is $x_n$-regular, and so $(-f)^p$ is $x_n$-regular as well.  It follows that $\Delta = \delta^*$ satisfies the hypotheses of Proposition \ref{weiergen}, and so
\[
\frac{N}{\tau(N)} \simeq \frac{R}{R \cap (R \cdot \delta + \tau R\langle\tau\rangle)} \simeq \frac{R}{\Delta(R)}
\]
is a finitely generated $R_{n-1}$-module.  As explained above, this proves that $E/\tau(E) = E/f \partial(E)$ is finitely generated over $R_{n-1}$.

The ring $R_{n-1}$ is Noetherian, so the $R_{n-1}$-submodule
\[
(fE + f\partial (E))/f\partial (E) \subset E/f \partial(E)
\] 
is also finitely generated.  This implies that there exists a finitely generated $R_{n-1}$-submodule $G \subset E$ such that $fE \subset fG + f\partial(E)$.  Now define $K = (0 :_{E + \partial(E)} f)$.  Clearly $K$ is a finitely generated $R$-module, since $E$ is.  As $K$ is annihilated by $f$, $K$ is in fact a finitely generated $R/fR$-module.  Since $f$ is $x_n$-regular by assumption, it follows from the Weierstrass preparation theorem that $K$ is a finitely generated $R_{n-1}$-module.  

We assert now that $E \subset G + K + \partial(E)$.  Given $\mu \in E$, we have $f\mu \in fE \subset fG + f\partial(E)$, so there exist $\mu' \in G$ and $\mu'' \in E$ such that $f\mu = f\mu' + f\partial(\mu'')$.  Rewrite this equation as 
\[
f(\mu-\mu') + f\partial(-\mu'') = f((\mu-\mu') + \partial(-\mu'')) = 0
\] 
to see that, by definition, $(\mu-\mu') + \partial(-\mu'')$ is an element of $K$ (since $\mu-\mu' \in E$).  Then $\mu = \mu' + ((\mu-\mu') + \partial(-\mu'')) + \partial(\mu'') \in G + K + \partial(E)$, as desired.  If we define $E_0 = G + K$, a finitely generated $R_{n-1}$-module (being the sum of two such), we have $E \subset E_0 + \partial(E)$, so that $R \cdot m \subset E \subset E_0 + \partial(E)$.  

By the Leibniz rule, it is straightforward to see that 
\[
\partial(E) = \partial(\sum_{i=0}^{p-1} R \cdot (f\partial)^i(m)) \subset \sum_{i=1}^p \partial^i(R \cdot m),
\] 
so all that remains to check is that the finitely generated $R_{n-1}$-module $E_0$, which we have shown satisfies $R \cdot m \subset E_0 + \sum_{i=1}^p \partial^i(R \cdot m)$, can be replaced with another finitely generated $R_{n-1}$-module which is actually a submodule of $R \cdot m$.  This can be done since every element of $E_0$ is a sum of terms belonging either to $E = \sum_{i=0}^{p-1} R \cdot (f\partial)^i(m)$ or to $\partial(E)$ and is thus congruent modulo $\sum_{i=1}^p \partial^i(R \cdot m)$ to an element of $R \cdot m$: given a finite set of $R_{n-1}$-generators for $E_0$, if we replace each of them by an element of $R \cdot m$ in the same $(\sum_{i=1}^p \partial^i(R \cdot m))$-congruence class and then replace $E_0$ with the $R_{n-1}$-module having these new generators, we obtain the statement of the proposition. 
\end{proof}

\section{Cokernels}\label{cokernels}

In light of Proposition \ref{holkernel}, we may be led to conjecture that the analogue holds for cokernels: that if $M$ is a holonomic $\D$-module, then $\overline{M} = \coker(\partial: M \rightarrow M)$ is a holonomic $\D_{n-1}$-module.  This conjecture is false:

\begin{prop}\cite[Thm.]{example}\label{counterex}
There exists a holonomic $\D$-module $M$ such that $\overline{M} = M/\partial(M)$ is not a holonomic $\D_{n-1}$-module.  Specifically, take $n=4$, let $f = x_1x_4 + x_2 + x_3x_4e^{x_4}$, and define $M = R_f$.  Then $M/\partial_4(M)$ is not a holonomic $\D_3$-module.
\end{prop}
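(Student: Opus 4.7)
The plan splits naturally into two parts. For the first claim, that $M = R_f$ is a holonomic $\D$-module, I would cite the standard fact from Bj\"{o}rk that for any nonzero $f \in R$ the localization $R_f$ is a holonomic $\D$-module; this is the same fact that underlies the $R$-torsionfree case of Proposition \ref{fgloc} above.

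The substance of the proposition is the second claim, that $\overline{M} = M/\partial(M)$ is not holonomic over $\D_3$, where $\partial = \partial_4$. I would begin by observing that $f(0,0,0,x_4) = 0$, and that $\partial(f) = x_1 + x_3(1+x_4)e^{x_4}$ also vanishes when $x_1 = x_3 = 0$; hence $f$ is not $x_4$-regular in the sense of Definition \ref{xnregf}, and no Weierstrass-type reduction along the $x_4$-axis is available. The strategy is then to exhibit the obstruction directly inside $\overline{M}$, by producing infinitely many $R_3$-linearly independent classes whose total $\D_3$-span cannot fit into any holonomic $\D_3$-module; equivalently, to construct a surjection from $\overline{M}$ onto a $\D_3$-module whose characteristic ideal has Krull dimension strictly greater than $3$, which would contradict holonomicity.

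A natural candidate would be the classes $\overline{x_3^j e^{j x_4}/f^{j+1}}$ for $j \geq 0$, or a similar sequence built from iterated $\partial$-derivatives of $\overline{1/f}$, in which the transcendental factors $e^{j x_4}$ accumulate with each application of $\partial$. To verify $R_3$-linear independence of these classes modulo $\partial(M)$, I would analyze what it would mean for an $R_3$-linear combination to equal $\partial(g/f^k)$ for some $g \in R$ and $k \geq 1$, using the formula $\partial(g/f^k) = \partial(g)/f^k - k g\, \partial(f)/f^{k+1}$; a comparison of leading terms in a suitable filtration by powers of $e^{x_4}$, combined with the algebraic independence of $1, e^{x_4}, e^{2 x_4}, \ldots$ over $k[[x_4]]$, should force all coefficients to vanish.

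The main obstacle is honest bookkeeping for the image $\partial(M)$: one must rule out \emph{every} way of expressing a given class as a $\partial$-image, tracking how denominators $f^k$ evolve under $\partial$ and how the transcendental factors already present in $\partial(f)$ interact with those in the numerator. Because $f$ is not $x_4$-regular, one cannot clear denominators by a Weierstrass substitution, so the argument must accommodate elements of arbitrarily high pole order along $\{f = 0\}$. The redeeming feature is that the construction of $f$ is engineered precisely so that the $e^{x_4}$ factor in $\partial(f)$ cannot be cancelled by multiplication by any element of $R$, and it is this transcendence that ultimately furnishes the counterexample.
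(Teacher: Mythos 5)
Your first step is fine: the holonomicity of $R_f$ for nonzero $f$ is indeed a standard fact from Bj\"{o}rk, and the paper uses it in the same way. The problems are all in the second part. First, the criterion you propose to detect non-holonomicity does not work as stated: exhibiting infinitely many $R_3$-linearly independent classes in $\overline{M}$ is not a contradiction with holonomicity, since holonomic $\D_3$-modules can have infinite rank over $R_3$ (local cohomology modules such as $H^3_{\mathfrak{m}_3}(R_3)$ already do). Your ``equivalently'' --- passing to a surjection onto a $\D_3$-module of dimension $> 3$ --- is the statement that would actually suffice, but it is not equivalent to the independence claim and you never construct such a surjection. Second, the mechanism you propose for proving independence is unsound: since $\ch k = 0$, the series $e^{x_4} = \sum x_4^i/i!$ is an element (in fact a unit) of $k[[x_4]] \subset R$, so there is no filtration ``by powers of $e^{x_4}$'' and no ``algebraic independence of $1, e^{x_4}, e^{2x_4}, \ldots$ over $k[[x_4]]$'' to appeal to; likewise the $e^{x_4}$ factor in $\partial_4(f)$ \emph{can} be cancelled by multiplying by the unit $e^{-x_4} \in R$. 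The transcendence that actually makes van den Essen's example work is that of $e^{x_4}$ over the \emph{polynomial} ring $k[x_4]$ (equivalently, over the rational functions), and it enters only inside an explicit ideal computation, not through a transcendence-degree argument over $R$.

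The paper's route (which it only outlines, the details being in van den Essen's note) is structurally different and avoids both difficulties. One assumes for contradiction that $\overline{M} = R_f/\partial_4(R_f)$ is holonomic over $\D_3$ and applies Proposition \ref{fgloc} to it, obtaining a nonzero $g \in R_3$ with $(\overline{M})_g$ finitely generated over $(R_3)_g$. A computation then shows that, for $f$ irreducible and coprime to $\partial_4(f)$, this finite generation forces $R_3 \cap (R\cdot f + R\cdot \partial_4 f) \neq 0$; a separate explicit computation with $f = x_1x_4 + x_2 + x_3x_4e^{x_4}$ shows that this intersection is in fact $0$. If you want to complete a proof, the missing ingredients are exactly these two computations together with the reduction via Proposition \ref{fgloc}; the direct ``exhibit independent classes'' strategy would at minimum need to be replaced by a genuine dimension or finite-generation obstruction.
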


As the details of this counterexample are not necessary for the proof of Theorem \ref{mainthm}, we omit them here, contenting ourselves with the following outline.

\begin{enumerate}
\item If $f = x_1x_4 + x_2 + x_3x_4e^{x_4}$, then $R_3 \cap (R \cdot f + R \cdot \partial_4 f) = 0$ (a tricky but explicit calculation).
\item For any $f \in R$, if $R_f/\partial_4(R_f)$ is a holonomic $\D_3$-module, then there exists a nonzero $g \in R_3$ such that $(R_f/\partial_4(R_f))_g$ is a finitely generated $(R_3)_g$-module (by Proposition \ref{fgloc}).
\item For any $f \in R$, if $f$ is irreducible and coprime to $\partial_4(f)$ and $(R_f/\partial_4(R_f))_g$ is a finitely generated $(R_3)_g$-module for some nonzero $g \in R_3$, then
\[
R_3 \cap (R \cdot f + R \cdot \partial_4 f) \neq 0
\] 
(another tricky calculation).
\item If $f = x_1x_4 + x_2 + x_3x_4e^{x_4}$, then $f$ is irreducible and coprime to $\partial_4(f)$.  It follows that if $R_f/\partial_4(R_f)$ were a holonomic $\D_3$-module, (1), (2), and (3) above would produce a contradiction.
\end{enumerate}

It follows that the analogue of Proposition \ref{holkernel} will only hold under additional hypotheses on $M$.  In this section, we will show that an $x_n$-regularity hypothesis suffices.  If $M = \D \cdot m$ is a holonomic $\D$-module, there is a natural choice of good filtration on $M$ defined using the chosen generator $m$ and the order filtration on $\D$.  This filtration descends to a filtration on the cokernel $\overline{M} = M/\partial(M)$.  The import of our key lemma for cokernels, Lemma \ref{coklemma}, is that if $m$ is an \emph{$x_n$-regular} element, this filtration is also good:

\begin{lem}\cite[Cor. 3]{cokernel}\label{goodfil}
Let $M = \D \cdot m$ be a holonomic $\D$-module, and suppose that $m$ is an $x_n$-regular element.  Let $\{M_j\}$ be the good filtration on $M$ defined by $M_j = \D_j \cdot m$ for all $j$, and let $\{\overline{M}_j\}$ be the corresponding filtration on $\overline{M} = M/\partial(M)$ defined by $\overline{M}_j = (M_j + \partial(M))/\partial(M)$ for all $j$.  Then $\overline{M}$ is a finitely generated $\D_{n-1}$-module, and $\{\overline{M}_j\}$ is a good filtration on $\overline{M}$.
\end{lem}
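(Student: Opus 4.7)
The plan is to identify $\overline{M}_j$ with $\D_{n-1,j} \cdot \overline{M}_0$ for every $j$, after first showing that $\overline{M}_0$ is a finitely generated $R_{n-1}$-module. Granted these two facts, finite generation of $\overline{M}$ over $\D_{n-1}$ and the good filtration property both follow formally: the equality $\D_{n-1,i} \cdot \D_{n-1,j} = \D_{n-1,i+j}$ of the order filtration on $\D_{n-1}$ yields $\D_{n-1,i} \cdot \overline{M}_j = \D_{n-1,i+j} \cdot \overline{M}_0 = \overline{M}_{i+j}$.

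For $\overline{M}_0$, I would apply Lemma \ref{coklemma} directly to the $x_n$-regular element $m$: it produces a finitely generated $R_{n-1}$-submodule $E_0 \subseteq R \cdot m = M_0$ with $M_0 \subseteq E_0 + \partial(M)$, so the image $\overline{M}_0$ of $M_0$ in $\overline{M}$ is a quotient of $E_0$ and hence finitely generated over $R_{n-1}$. This is the only place the $x_n$-regularity hypothesis enters.

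The main work is the reverse inclusion $\overline{M}_j \subseteq \D_{n-1,j} \cdot \overline{M}_0$ (the forward inclusion is immediate from $\D_{n-1,j} \cdot M_0 \subseteq M_j$). Given any $u = \sum_{|\alpha| \leq j} r_\alpha \partial^\alpha(m) \in M_j$, I would first eliminate $\partial_n$ from each summand: iterated use of the identity $r \partial_n(v) = \partial_n(rv) - \partial_n(r)\, v$ gives, for each $\alpha = (a_1, \ldots, a_n)$ with $a_n \geq 1$, the congruence
\[
r_\alpha \partial^\alpha(m) \equiv (-1)^{a_n} \partial_n^{a_n}(r_\alpha) \cdot \partial_1^{a_1} \cdots \partial_{n-1}^{a_{n-1}}(m) \pmod{\partial(M)}.
\]
Collecting terms, $u \equiv \sum_{|\beta| \leq j,\ \beta_n = 0} S_\beta \partial^\beta(m) \pmod{\partial(M)}$ for some $S_\beta \in R$. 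Then I would argue by induction on $|\beta|$ that for every $\rho \in R$ and every $\beta$ with $\beta_n = 0$, the image $\overline{\rho\, \partial^\beta(m)}$ lies in $\D_{n-1,|\beta|} \cdot \overline{M}_0$. The base case $|\beta| = 0$ is immediate since $\rho m \in M_0$. The inductive step uses the Leibniz identity
\[
\rho\, \partial^\beta(m) = \partial^\beta(\rho m) - \sum_{0 < \gamma \leq \beta} \binom{\beta}{\gamma} \partial^\gamma(\rho)\, \partial^{\beta - \gamma}(m),
\]
where the first term has image $\partial^\beta \overline{\rho m} \in \D_{n-1,|\beta|} \cdot \overline{M}_0$ (since $\overline{\rho m} \in \overline{M}_0$), and each remaining term has strictly smaller $|\beta - \gamma|$, hence lies in $\D_{n-1,|\beta|-1} \cdot \overline{M}_0$ by induction.

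The main obstacle is this last induction: $\overline{M}_0$ carries only an $R_{n-1}$-module structure (since $x_n$ does not preserve $\partial(M)$), so converting an $R$-coefficient $\rho$ on $\partial^\beta(m)$ into a $\D_{n-1}$-action on an element of $\overline{M}_0$ requires the Leibniz identity to move $\rho$ through $\partial^\beta$, together with a descent on the order $|\beta|$.
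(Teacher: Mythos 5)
Your proposal is correct and follows essentially the same route as the paper: apply Lemma \ref{coklemma} to the generator $m$ to obtain a finitely generated $R_{n-1}$-submodule $E_0 \subset R \cdot m$ with $R \cdot m \subset E_0 + \partial(M)$, and then conclude that $\overline{M}_j$ is $(\D_{n-1})_j$ applied to the (finitely many) images of generators of $E_0$. The only difference is that the paper asserts the identification $\overline{M}_j = \sum_i (\D_{n-1})_j \cdot \overline{m_i}$ without comment, whereas you supply the verification (integration by parts to eliminate $\partial_n$, then the Leibniz induction to move $R$-coefficients into $\overline{M}_0$); that verification is sound.
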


\begin{proof}
By Lemma \ref{coklemma}, there exists $p \geq 1$ and a finitely generated $R_{n-1}$-submodule $E_0$ of $R \cdot m$ such that $R \cdot m \subset E_0 + \partial(M)$.  Suppose that $m_1, \ldots, m_l$ are $R_{n-1}$-generators of $E_0$, so that $R \cdot m \subset (\sum_{i=1}^l R_{n-1} \cdot m_i) + \partial(M)$.  Then if $\overline{m_i}$ is the class of $m_i$ in $\overline{M}$, we have $\overline{M} = \sum_{i=1}^l \D_{n-1} \cdot \overline{m_i}$ (so $\overline{M}$ is a finitely generated $\D_{n-1}$-module) and $\overline{M}_j = \sum_{i=1}^l (\D_{n-1})_j \overline{m_i}$, so that $\{\overline{M}_j\}$ is a good filtration (the associated graded $\gr \D_{n-1}$-module is generated by the images of the $\overline{m_i}$).
\end{proof}

We can now state and prove the analogue of Proposition \ref{holkernel} with a suitable additional hypothesis:

\begin{prop}\cite[Cor. 1.7]{essen}\label{coker}
Let $M$ be a holonomic, $x_n$-regular $\D$-module.  Then $\overline{M} = M/\partial(M)$ is a holonomic $\D_{n-1}$-module.
\end{prop}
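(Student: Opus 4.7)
My plan is to bound the dimension $d(\overline{M}) := \dim \gr \D_{n-1}/J_{n-1}$ by $n-1$, where $J_{n-1} = \sqrt{\Ann_{\gr \D_{n-1}}(\gr \overline{M})}$ is the characteristic ideal; combined with Bernstein's inequality this gives holonomicity. Since $M$ is holonomic and hence cyclic, I would fix a $\D$-generator $m$, which is $x_n$-regular by Remark \ref{holMreg}. Equip $M$ with the good filtration $\{M_j = \D_j m\}$ and let $\{\overline{M}_j = (M_j + \partial M)/\partial M\}$ be the induced filtration on $\overline{M}$. By Lemma \ref{goodfil}, this is a good filtration over $\D_{n-1}$, so $d(\overline{M})$ is computed from $\gr \overline{M}$ with respect to it.

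Next I would relate $J_{n-1}$ to the characteristic ideal $J = \sqrt{\Ann_{\gr \D}(\gr M)}$ of $M$ (so $\dim \gr \D/J = n$). The natural surjection $\gr M \twoheadrightarrow \gr \overline{M}$ of graded abelian groups is $\gr \D_{n-1}$-linear, and for any $\xi \in M_j$ the class of $\partial \xi \in M_{j+1}$ vanishes in $\overline{M}_{j+1}$, so $\zeta_n \cdot \gr M$ lies in the kernel. Hence $\gr \overline{M}$ is a $\gr \D_{n-1}$-module quotient of the cyclic $\gr \D$-module $\gr M/\zeta_n \gr M \cong \gr \D/(I + (\zeta_n))$, where $I = \Ann_{\gr \D}(\bar m)$. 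Via an integrality argument exploiting the cyclic structure over $\gr \D$, the problem reduces to showing $\dim \gr \D/(J + (\zeta_n)) \leq n-1$, equivalently that no minimal prime of $J$ contains $\zeta_n$.

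For the core step, suppose for contradiction that $\mathfrak{p}$ is a minimal prime of $J$ with $\zeta_n \in \mathfrak{p}$. By Corollary \ref{primeinvol}, $\mathfrak{p}$ is involutive. The Poisson bracket satisfies $\{\zeta_n, -\} = \partial/\partial x_n$ on $\gr \D = R[\zeta_1, \ldots, \zeta_n]$ (acting on $R$-coefficients and killing the $\zeta_i$), so involutivity forces $\mathfrak{p}_R := \mathfrak{p} \cap R$ to be closed under $\partial_n$. The $x_n$-regularity of $m$ provides an $x_n$-regular $f \in R$ and $p \geq 1$ with $(f\partial)^p m \in \sum_{i<p} R \cdot (f \partial)^i m$; taking principal symbols gives $(f \zeta_n)^p \in \Ann_{\gr \D}(\bar m) \subseteq \mathfrak{p}$, so $f \zeta_n \in \mathfrak{p}$. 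If this can be promoted to $f \in \mathfrak{p}_R$ for an $x_n$-regular $f$, Weierstrass preparation (Theorem \ref{weierprep}) lets us assume $f$ is a distinguished polynomial $x_n^d + b_{d-1} x_n^{d-1} + \cdots + b_0$ with $b_i \in \mathfrak{m}_{n-1}$; applying the $\partial_n$-closedness of $\mathfrak{p}_R$ a total of $d$ times produces $d! \in \mathfrak{p}_R$, contradicting the properness of $\mathfrak{p}$ since $d! \in k^\times$.

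The main obstacle is producing an $x_n$-regular $f \in \mathfrak{p}_R$: from $f\zeta_n \in \mathfrak{p}$ with $\zeta_n \in \mathfrak{p}$, primality alone does not yield $f \in \mathfrak{p}$. Overcoming this will likely require exploiting that every element of $M$ (not only $m$) is $x_n$-regular by Remark \ref{holMreg}, thereby producing a richer family of symbol relations $(f_{m'} \zeta_n)^{p_{m'}} \in \Ann_{\gr \D}(\overline{m'})$ whose interplay with the $\partial_n$-closed prime $\mathfrak{p}_R$ forces some $x_n$-regular witness into $\mathfrak{p}_R$; alternatively, a more refined analysis of the surjection $\gr M \twoheadrightarrow \gr \overline M$ showing that components of $V(J)$ lying entirely in $V(\zeta_n)$ contribute nothing to $\Supp_{\gr \D_{n-1}}(\gr \overline M)$ would bypass the issue altogether.
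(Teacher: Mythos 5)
Your setup (cyclic generator, the good filtrations of Lemma \ref{goodfil}, the $S_{n-1}$-linear surjection $\gr M/\zeta_n \gr M \twoheadrightarrow \gr \overline{M}$, and the resulting bound $d(\overline{M}) \leq \dim S_{n-1}/(S_{n-1} \cap (\sigma(L) + (\zeta_n)))$, with $S = \gr \D$ and $S_{n-1} = \gr \D_{n-1}$) agrees with the paper. The gap is in your proposed reduction to ``$\dim S/(J + (\zeta_n)) \leq n-1$, equivalently no minimal prime of $J$ contains $\zeta_n$.'' That target statement is false, and you correctly sense the problem yourself: from $f\zeta_n \in \mathfrak{p}$ one cannot conclude $f \in \mathfrak{p}$, and indeed when $f \notin \mathfrak{p}$ primality forces $\zeta_n \in \mathfrak{p}$. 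So minimal primes of $J$ containing $\zeta_n$ genuinely occur (already for $M = R$, where $J = (\zeta_1, \ldots, \zeta_n)$), and no amount of extra symbol relations will rule them out. Moreover, the ``integrality argument'' you invoke to pass from $S_{n-1}/(S_{n-1} \cap (\mathfrak{p} + (\zeta_n)))$ to $S/(\mathfrak{p} + (\zeta_n))$ is only available when $\mathfrak{p}$ contains an $x_n$-regular element of $R$ (so that Weierstrass preparation makes $S/(\mathfrak{p} + (\zeta_n))$ finite over $S_{n-1}$); in general a subring can have strictly larger dimension than the ambient ring, so the inequality you need does not come for free.

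The paper resolves this by splitting into two cases according to whether the $x_n$-regular $f$ (witnessing regularity of $m$) lies in $\mathfrak{p}$. If $f \in \mathfrak{p}$, your involutivity argument works essentially verbatim: $\{\zeta_n, f\} = \partial(f)$, so $\zeta_n \in \mathfrak{p}$ would force $\partial^l(f) \in \mathfrak{p}$ for all $l$ and hence a unit in $\mathfrak{p}$; thus $\zeta_n \notin \mathfrak{p}$, and Weierstrass preparation supplies the finiteness needed to equate the two dimensions. If $f \notin \mathfrak{p}$, then $\zeta_n \in \mathfrak{p}$ and the dimension drop must come from elsewhere: one shows, using involutivity of $\mathfrak{p}$ together with Krull's intersection theorem, that $S_{n-1} \cap (\mathfrak{p} + (\zeta_n))$ equals the ideal $\mathfrak{p}_{\circ}$ of ``$x_n = \zeta_n = 0$ specializations'' of elements of $\mathfrak{p}$, whence $S_{n-1}/(S_{n-1} \cap (\mathfrak{p} + (\zeta_n))) \simeq S/(\mathfrak{p} + (x_n))$; the required inequality then follows because $x_n \notin \mathfrak{p}$, which is immediate from $\{\zeta_n, x_n\} = 1$ and involutivity. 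Your second suggested workaround (components of $V(J)$ inside $V(\zeta_n)$ contribute nothing) is closer in spirit to this, but as stated it is not what happens: such components do contribute, and the point is rather that intersecting with $S_{n-1}$ converts the condition $\zeta_n \in \mathfrak{p}$ into the harmless condition on $x_n$.
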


\begin{proof}
Fix $m \in M$ such that $M = \D \cdot m$.  Let $L \subset \D$ be the annihilator of $m$ in $\D$ (a left ideal) so that $M \simeq \D/L$ as (left) $\D$-modules.  By hypothesis, there exists an $x_n$-regular $f \in R$ such that $E_{f\partial}(m)$ is a finitely generated $R$-module.  Let $\{M_j\}$ and $\{\overline{M}_j\}$ be the good filtrations defined (on $M$ and $\overline{M}$, respectively) in the statement of Lemma \ref{goodfil}.

We now consider various associated graded objects.  Let $S_{n-1} = R_{n-1}[\zeta_1, \ldots, \zeta_{n-1}]$ be the associated graded ring $\gr \D_{n-1}$ of $\D_{n-1}$ with respect to the order filtration, where $\zeta_i$ is the principal symbol of $\partial_i$.  Similarly, let $S = \gr \D = R[\zeta_1, \ldots, \zeta_n]$.  Let $\sigma(L) \subset S$ be the ideal generated by the principal symbols of the elements of $L$.  Let $\gr M$ be the associated graded $S$-module of $M$ with respect to the filtration $\{M_j\}$, and $\gr \overline{M}$ the associated graded $S_{n-1}$-module of $\overline{M}$ with respect to the filtration $\{\overline{M}_j\}$.  Since $M \simeq \D/L$, we have $\gr M \simeq \gr(\D/L) \simeq S/\sigma(L)$ as $S$-modules.  Consider the natural surjective map $\gr M \rightarrow \gr \overline{M}$ defined by associating, to the class of an element of $M_j$ modulo $M_{j-1}$, its class modulo $M_{j-1} + \partial(M)$.  This map is $S_{n-1}$-linear, and as the principal symbol of $\partial$ is $\zeta_n$, it is clear that $\zeta_n \gr M$ lies in its kernel.  We therefore obtain an $S_{n-1}$-linear surjection 
\[
\gr M/\zeta_n \gr M \rightarrow \gr \overline{M}.
\]  
Since $\gr M \simeq S/\sigma(L)$, the source of this surjection can be identified with $S/(\sigma(L) + (\zeta_n))$ as an $S$-module, and since this surjection is $S_{n-1}$-linear, we see that
\[
S_{n-1} \cap (\sigma(L) + (\zeta_n)) \subset \Ann_{S_{n-1}} \gr \overline{M},
\]
where both sides are ideals of $S_{n-1}$.  Therefore we have
\[
d(\overline{M}) = \dim \, S_{n-1}/(\Ann_{S_{n-1}} \gr \overline{M}) \leq \dim \, S_{n-1}/(S_{n-1} \cap (\sigma(L) + (\zeta_n))),
\]
where the equality holds by the definition of dimension of a $\D_{n-1}$-module (as $\{\overline{M}_j\}$ is a good filtration) and the inequality follows from the containment of ideals above.  

We now state two basic facts about radicals of ideals, whose proofs are immediate and which hold in general for commutative rings.  Let $\mathfrak{a}$ and $\mathfrak{b}$ be ideals of $S$.  Then
\[
\sqrt{\mathfrak{a} + \mathfrak{b}} = \sqrt{\sqrt{\mathfrak{a}} + \mathfrak{b}} = \sqrt{\sqrt{\mathfrak{a}} + \sqrt{\mathfrak{b}}},
\]
and if $\sqrt{\mathfrak{a}} = \sqrt{\mathfrak{b}}$, then $\sqrt{S_{n-1} \cap \mathfrak{a}} = \sqrt{S_{n-1} \cap \mathfrak{b}}$.  Together, these facts imply that if $J = \sqrt{\sigma(L)}$, then
\begin{align*}
\dim \, S_{n-1}/(S_{n-1} \cap (\sigma(L) + (\zeta_n))) &= \dim \, S_{n-1}/(S_{n-1} \cap (J + (\zeta_n)))\\ &= \dim \, S_{n-1}/(S_{n-1} \cap \sqrt{J + (\zeta_n)}),
\end{align*}
since the three denominators all have the same radical.  In particular, we have
\[
d(\overline{M}) \leq \dim \, S_{n-1}/(S_{n-1} \cap (J + (\zeta_n))).
\]
Note that $J = \sqrt{\Ann_S \gr M}$.  Since $M$ is a holonomic $\D$-module, $d(M) = \dim \, S/J = n$.  By Bernstein's inequality, $d(\overline{M}) \geq n-1$, and so it suffices to prove that $d(\overline{M}) \leq n-1 = \dim \, S/J - 1$.  That is, we have reduced ourselves to showing that
\[
\dim \, S_{n-1}/(S_{n-1} \cap (J + (\zeta_n))) \leq \dim \, S/J - 1.
\]
As $J \subset S$ is a radical ideal in a Noetherian ring, we can write it as an intersection of finitely many prime ideals: let $J = \mathfrak{p}_1 \cap \cdots \cap \mathfrak{p}_t$ be such an expression where each $\mathfrak{p}_i$ is minimal over $J$.  We claim that
\[
\sqrt{J + (\zeta_n)} = \cap_{i=1}^t \sqrt{\mathfrak{p}_i + (\zeta_n)}.
\]
One containment is obvious.  For the other, let $x$ belong to $\sqrt{\mathfrak{p}_i + (\zeta_n)}$ for all $i$, and suppose $m$ is large enough that $x^m \in \mathfrak{p}_i + (\zeta_n)$ for all $i$.  Then there exist $y_i \in \mathfrak{p}_i$ and $z_i \in R$ such that
\[
x^m = y_1 + z_1\zeta_n = \cdots = y_t + z_t\zeta_n,
\]
and so $x^{mt} - y_1 \cdots y_t \in (\zeta_n)$.  Since $y_1 \cdots y_t \in \cap_{i=1}^t \mathfrak{p}_i = J$, it follows that $x \in \sqrt{J + (\zeta_n)}$, as claimed.  Therefore,
\begin{align*}
\dim \, S_{n-1}/(S_{n-1} \cap (J + (\zeta_n))) &= \dim \, S_{n-1}/(S_{n-1} \cap \sqrt{J + (\zeta_n)})\\ &= \dim \, S_{n-1}/(S_{n-1} \cap (\cap_{i=1}^t \sqrt{\mathfrak{p}_i + (\zeta_n)})),
\end{align*}
and so it suffices to prove
\[
\dim \, S_{n-1}/(S_{n-1} \cap (\mathfrak{p} + (\zeta_n))) \leq \dim \, S/\mathfrak{p} - 1
\]
where $\mathfrak{p}$ is a minimal prime ideal containing $J$.

First suppose that $f \in \mathfrak{p}$.  As $f$ is $x_n$-regular, $R/fR$ is a finitely generated $R_{n-1}$-module by the Weierstrass preparation theorem.  Therefore, since $f \in \mathfrak{p}$, $S/\mathfrak{p}$, and, \emph{a fortiori}, $S/(\mathfrak{p} + (\zeta_n))$, is a finitely generated $S_{n-1}$-module.  It follows that
\[
S_{n-1}/(S_{n-1} \cap (\mathfrak{p} + (\zeta_n))) \subset S/(\mathfrak{p} + (\zeta_n))
\]
is a finite (hence integral) extension of Noetherian rings, and so both rings have the same dimension \cite[Ex. 9.2]{matsumura}.  We have therefore reduced ourselves to proving that
\[
\dim \, S/(\mathfrak{p} + (\zeta_n)) \leq \dim \, S/\mathfrak{p} - 1,
\]
that is (since $S/\mathfrak{p}$ is an integral domain), that $\zeta_n \notin \mathfrak{p}$.  Suppose for contradiction that $\zeta_n \in \mathfrak{p}$.  The ideal $J = \sqrt{\Ann_S \gr M}$ is involutive by Gabber's theorem, and so $\mathfrak{p}$ is also involutive by Corollary \ref{primeinvol}.  Since $f$ and $\zeta_n$ both belong to $\mathfrak{p}$, so does the Poisson bracket $\{\zeta_n, f\} = \partial(f)$; continuing in this way, $\partial^l(f) \in \mathfrak{p}$ for all $l$.  Taking $l$ to be the smallest index such that $x_n^l$ appears in the expansion of $f$ with a nonzero scalar coefficient (such $l$ exists since $f$ is $x_n$-regular), we see that $\mathfrak{p}$ contains a unit, a contradiction.  Therefore $\zeta_n \notin \mathfrak{p}$, as desired.

For the other (harder) case, suppose that $f \notin \mathfrak{p}$.  Recall that by hypothesis $E_{f \partial}(m)$ is a finitely generated $R$-module, so there exists $q$ such that $(f \partial)^q(m)$ belongs to the $R$-submodule of $M$ generated by $\{(f \partial)^j(m)\}_{j=0}^{q-1}$.  Let $\rho_0, \ldots, \rho_{q-1} \in R$ be such that
\[
(f \partial)^q(m) = \sum_{j=0}^{q-1} \rho_j (f \partial)^j(m);
\]
it follows that $(f \partial)^q - \sum_{j=0}^{q-1} \rho_j (f \partial)^j \in \D$ annihilates $m$, and hence its principal symbol $(f \zeta_n)^q$ belongs to $\sigma(L)$.  Therefore $f\zeta_n \in \sqrt{\sigma(L)} = J \subset \mathfrak{p}$.  As we have assumed that $f \notin \mathfrak{p}$ and $\mathfrak{p}$ is prime, this implies that $\zeta_n \in \mathfrak{p}$.

For any $\alpha \in \mathfrak{p}$, let $\alpha_{\circ} = \alpha(x_1, \ldots, x_{n-1}, 0, \zeta_1, \ldots, \zeta_{n-1}, 0) \in S_{n-1}$, and let $\mathfrak{p}_{\circ}$ be the ideal of $S_{n-1}$ consisting of all $\alpha_{\circ}$ where $\alpha$ ranges over $\mathfrak{p}$.  We have
\[
\mathfrak{p}_{\circ} + (x_n, \zeta_n) = \mathfrak{p} + (x_n, \zeta_n) = \mathfrak{p} + (x_n)
\]
as ideals of $S$, since $\zeta_n \in \mathfrak{p}$.  We note that $\mathfrak{p} + (x_n) \neq S$ as a consequence of the fact that $\mathfrak{p}$ is homogeneous with respect to $\zeta_1, \ldots, \zeta_n$ (Remark \ref{zetahom}): if $1 + sx_n \in \mathfrak{p}$ for some $s \in S$, then $1 + s_0x_n \in \mathfrak{p}$ where $s_0 \in R$ is the constant term of $s$ with respect to the $\zeta_i$; but then $\mathfrak{p}$ contains a unit of $R$, a contradiction.

It is clear that $\mathfrak{p} \cap S_{n-1} \subset \mathfrak{p}_{\circ}$, and we claim that equality holds, that is, that $\mathfrak{p}_{\circ} \subset \mathfrak{p}$.  Since $\zeta_n \in \mathfrak{p}$, it suffices to check that if $a \in \mathfrak{p}$ is of the form $\sum_{i=0}^{\infty} a_i x_n^i$ with $a_i \in S_{n-1}$, then the $x_n$-constant term $a_0$ belongs to $\mathfrak{p}$.  We will verify this by showing that $a_0 \in \mathfrak{p} + x_n^q S'$ for all $q \geq 1$, where $S' = R[\zeta_1, \ldots, \zeta_{n-1}] \subset S$.  This suffices because then 
\[
a \in \cap_{q=1}^{\infty} \mathfrak{p} + (x_n^q) \subset S,
\]
and the right-hand side is simply $\mathfrak{p}$ by Krull's intersection theorem \cite[Thm. 8.10(ii)]{matsumura} applied to the integral domain $S/\mathfrak{p}$ and its ideal $(\mathfrak{p} + (x_n))/\mathfrak{p}$, which is a proper ideal since we have already checked that $\mathfrak{p} + (x_n) \neq S$.

It is clear that $a_0 \in \mathfrak{p} + x_nS'$.  Now assume for some $q \geq 1$ that $a_0 = g + x_n^q h$ for some $g \in \mathfrak{p}$ and $h \in S'$, and let $h_0$ be the $x_n$-constant term of $h$.  On the one hand, since $a_0$ belongs to $S'$, the Poisson bracket $\{\zeta_n, a_0\}$ is zero.  On the other hand, using the biderivation property, we see that
\[
0 = \{\zeta_n, a_0\} = \{\zeta_n, g\} + x_n^q\{\zeta_n, h\} + qx_n^{q-1}h.
\]
Since $\mathfrak{p}$ is involutive by Corollary \ref{primeinvol}, we have $\{\zeta_n, g\} \in \mathfrak{p}$, from which it follows that $qx_n^{q-1}h_0 \in \mathfrak{p} + x_n^qS'$, hence $x_n^qh_0 \in \mathfrak{p} + x_n^{q+1}S'$, and finally $a_0 = g + x_n^qh \in \mathfrak{p} + x_n^{q+1}S'$, completing the induction.  We conclude that $\mathfrak{p}_{\circ} = \mathfrak{p} \cap S_{n-1}$.

We can now finish the proof.  We have isomorphisms of rings
\[
\frac{S_{n-1}}{S_{n-1} \cap (\mathfrak{p} + (\zeta_n))} \simeq \frac{S_{n-1}}{\mathfrak{p}_{\circ}} \simeq \frac{S}{\mathfrak{p}_{\circ} + (x_n, \zeta_n)} \simeq \frac{S}{\mathfrak{p} + (x_n)},
\]
and hence $\dim \, S_{n-1}/(S_{n-1} \cap (\mathfrak{p} + (\zeta_n))) = \dim \, S/(\mathfrak{p} + (x_n))$.  We need only show that $\dim \, S/(\mathfrak{p} + (x_n)) \leq \dim \, S/\mathfrak{p} - 1$, that is (since $S/\mathfrak{p}$ is an integral domain) that $x_n \notin \mathfrak{p}$.  But this is immediate: if $x_n \in \mathfrak{p}$, then since $\mathfrak{p}$ is involutive, $\{\zeta_n, x_n\} = 1 \in \mathfrak{p}$, a contradiction.  This completes the proof.
\end{proof}

We now have all we need for the proof of Theorem \ref{mainthm}:

\begin{proof}[Proof of Theorem \ref{mainthm}]
We proceed by induction on $n$.  The case $n=0$ is obvious, since a holonomic $\D_0$-module is nothing but a finite-dimensional $k$-space.  Now suppose that $n > 0$ and $M$ is a holonomic $\D$-module.  By Proposition \ref{fgloc}, there exists a nonzero $f \in R$ such that $M_f$ is a finitely generated $R_f$-module.  After a linear change of coordinates, we may assume $f$ is $x_n$-regular; by Proposition \ref{dRind}, this change of coordinates does not affect the de Rham cohomology of $M$.  Assuming this change of coordinates has been made, $M$ is $x_n$-regular by Lemma \ref{reglink}.  By Propositions \ref{holkernel} and \ref{coker}, the kernel $M_*$ and cokernel $\overline{M}$ of $\partial$ acting on $M$ are holonomic $\D_{n-1}$-modules, and by the inductive hypothesis have finite-dimensional de Rham cohomology.  The exact sequences
\[
\cdots \rightarrow H_{dR}^i(M_*) \rightarrow H_{dR}^i(M) \rightarrow H_{dR}^{i-1}(\overline{M}) \rightarrow \cdots
\]
of Lemma \ref{derhamles} finish the proof.
\end{proof}

\end{document}